\theoremstyle{plain} 
\newtheorem{theorem}{Theorem}[section]
\newtheorem{lemma}[theorem]{Lemma}
\newtheorem{proposition}[theorem]{Proposition}
\theoremstyle{definition}
\newtheorem{definition}{Definition}[theorem]
\newtheorem{remark}{Remark}[theorem]
	\crefname{claim}{Claim}{Claims}
	\Crefname{claim}{Claim}{Claims}
	\crefname{app-corollary}{Corollary}{Corollaries}
	\Crefname{app-corollary}{Corollary}{Corollaries}
	\crefname{app-definition}{Definition}{Definitions}
	\Crefname{app-definition}{Definition}{Definitions}
	\crefname{figure}{Figure}{Figures}
	\Crefname{figure}{Figure}{Figures}
	\crefname{lemma}{Lemma}{Lemmata}
	\Crefname{lemma}{Lemma}{Lemmata}
	\crefname{app-lemma}{Lemma}{Lemmata}
	\Crefname{app-lemma}{Lemma}{Lemmata}
	\crefname{app-proposition}{Proposition}{Proposition}
	\Crefname{app-proposition}{Proposition}{Proposition}
	\crefname{app-theorem}{Theorem}{Theorems}
	\Crefname{app-theorem}{Theorem}{Theorems}
\title{Topological uniqueness results for Lefschetz fibrations over the disc}
\author{A.~A.~Kazhymurat}
\date{August 2018}
\begin{document}

\maketitle
\begin{abstract}
    We prove that a Lefschetz fibration over the disc that, after compactification, has the same singular fibers as an extremal rational elliptic surface can be obtained by deleting a singular fiber and a section from the rational extremal elliptic surface, i.e. such a Lefschetz fibration is determined up to topological equivalence by its set of singular fibers. 
    We get a complete clasification of Lefschetz fibrations with 2 $I_1$ fibers as a byproduct of our results. 

    The proof is inspired by homological mirror symmetry and Karpov--Nogin's theorem on constructivity of helices on del Pezzo surfaces. 

    It would be interesting to extend our results to the case of Lefschetz fibrations that, after compactification, have the same singular fibers as an extremal elliptic K3 surface. 
\end{abstract}
\section{Introduction}
Morse theory describes the topology of a smooth manifold in terms of the critical points of a generic smooth real-valued function on it. Given a Morse function on a smooth manifold one can construct a handle decomposition of this manifold (see e.g. \cite{Milnor}). 

Picard--Lefschetz theory is a complex analogue of Morse theory where the role of Morse functions is played by Lefschetz fibrations, i.e. holomorphic maps to the Riemann sphere with non-degenerate critical points.

Lefschetz fibrations were later extended to the setting of symplectic manifolds. Donaldson has shown that every compact symplectic manifold, possibly after removing a real codimension 2 submanifold, is a total space of some Lefschetz fibration. Because the class of compact symplectic manifold is topologically diverse (for example, every finitely presented group arises as a fundamental group of some compact symplectic 4-manifold \cite{Gompf1}), this shows that Lefschetz fibrations are a powerful instrument to understand the topology of 4-manifolds. 

There is no general classification result known for Lefschetz fibrations. The rich topological behaviour displayed by Lefschetz fibrations makes such a classification inaccessible with the present tools; for example, Lefschetz fibrations of genus at least 2 over the disk were used by Ozbagci \cite{Ozbagci} to find contact 3-manifolds admitting infinitely many pairwise non-diffeomorphic Stein fillings. Prior to Auroux \cite{Auroux}, however, it was generally believed that the classification of Lefschetz fibrations of genus 1 over the disk is comparatively simple. 

Auroux \cite{Auroux} constructed examples of 2 inequivalent Lefschetz fibrations of genus 1 over the disk with 2 singular fibers. His construction relied on the connection between Lefschetz fibrations and factorizations in the mapping class group. His results imply that there exist contact 3-manifolds admitting 2 inequivalent Stein fillings with diffeomorphic total spaces. This demonstrates the interesting topology of Lefschetz fibrations with 2 singular fibers over the disc. 

The main result of the paper is that for each of the 14 extremal rational types there exists only one Lefschetz fibration up to topological equivalence. This implies, in particular, that every Lefschetz fibration of extremal rational type is algebraic. Furthermore, we obtain a complete classification of Lefschetz fibrations with 2 singular fibers of type $I_1$. 

In Section~\ref{Lefschetz-section} we define Lefschetz fibrations and describe their relationship to monodromy factorizations in the mapping class group. In Section~\ref{section-MCG} we prove Proposition~\ref{intersections} connecting monodromy factorizations and algebraic intersection numbers of vanishing cycles. In Section~\ref{section-1119} we use Proposition~\ref{intersections} to prove that the algebraic intersection numbers of vanishing cycles satisfy Markov type equations. The transitivity of the braid group action on the set of positive integral solutions of Markov type equations, first established by Karpov--Nogin \cite{Karpov}, then implies the topological uniqueness of the Lefschetz fibration over the disc of extremal rational type. 


 
\section{Preliminaries}
\subsection{Mapping class group}
Let $\Sigma$ be a compact oriented surface with boundary $\partial \Sigma$. \textit{The mapping class group} of $\Sigma$ is the group $\mathrm{MCG}(\Sigma)$ of isotopy classes of orientation-preserving homeomorphisms of $\Sigma$ that fix $\partial M$ pointwise.

Let $\Sigma_{1, 1}$ be the torus with one boundary component. It is known that $\mathrm{MCG}(\Sigma_{1, 1})\approx B_3$ \cite{Margalit}.  

The first singular homology group of $\Sigma_{1,1}$ has rank 2 as a $\mathbb{Z}$-lattice. The intersection form $\langle \cdot, \cdot \rangle$ defines a symplectic form on $H_1(\Sigma_{1, 1},\mathbb{Z})$. 

The group $\mathrm{MCG}(\Sigma_{1, 1})$ has a natural representation on $H_1(\Sigma_{1, 1},\mathbb{Z})$ called the \textit{symplectic representation}. This representation is a surjective homomorphism $\mathrm{MCG}(\Sigma_{1, 1})\rightarrow SL(2, \mathbb{Z})$ whose kernel coincides with the center of $\mathrm{MCG}(\Sigma_{1, 1})$ (see for example \cite{Margalit}). A Dehn twist around a simple closed curve $C$ acts on a homology class $\gamma \in H_1(\Sigma_{1, 1}, \mathbb{Z})$ as follows
\begin{equation}
\label{dehneffect}
\tau_{C}\gamma=\gamma+\langle [C], \gamma\rangle [C].    
\end{equation}

The group $\mathrm{MCG}(\Sigma_{1, 1})$ is generated by the Dehn twists $\tau_a$, $\tau_b$ around any two simple closed curves intersecting transversely at one point. The Dehn twist around a curve parallel to the boundary component is $\delta=(\tau_a \tau_b)^6$. It generates the kernel of the symplectic representation $\mathrm{MCG}(\Sigma_{1, 1})\rightarrow SL(2, \mathbb{Z})$.

The abelianization of $\mathrm{MCG}(\Sigma_{1, 1})$ is $\mathbb{Z}$. Under the abelianization map Dehn twists around non-separating simple closed curves map to $1$, while $\delta$ maps to $12$ \cite{Margalit}. 
\begin{remark}
For $\Sigma_{1, 1}$, every primitive homology class contains exactly one isotopy class of simple closed curves \cite{Rivin}, \cite{Margalit}. This means that to specify the Dehn twist $\tau_C$ around a curve $C$ it is enough to give the homology class $[C]\in H_1(\Sigma_{1, 1}, \mathbb{Z})$. 
\end{remark}
\label{Lefschetz-section}
\subsection{Lefschetz fibrations}
A \textit{Lefschetz fibration} over the disc is a smooth surjective map $f:M\rightarrow D^2$ from a compact oriented smooth 4-dimensional manifold $M$ with boundary $\partial M$ to the closed 2-disc $D^2$ having the following properties:
\begin{itemize}
    \item it is a submersion away from the critical points; 
    \item it has finitely many critical points and each critical point is non-degenerate;
    \item all critical values lie in the interior of $D^2$;
    \item each critical point has an orientation-preserving complex chart in which $f(z_1, z_2)=z_1^2+z_2^2$.
\end{itemize}
Note that some authors require $f$ to be injective on the set of critical points. 

Let $f:M\rightarrow D^2$ be a Lefschetz fibration with critical points $p_1,\dots, p_r$. A \textit{smooth fiber} of the Lefschetz fibration $f:M\rightarrow D^2$ is the preimage under $f$ of a point in $D^2\backslash \{f(p_1), \dots, f(p_r)\}$. Ehresmann's lemma shows that smooth fibers corresponding to two different points in  $D^2/\{f(p_1), \dots, f(p_r)\}$ are diffeomorphic. 

Analogously, a \textit{singular fiber} of the Lefschetz fibration $f:M\rightarrow D^2$ is the preimage under $f$ of $f(p_i)\in D^2$ for some $i$, $1 \leq i \leq r$. The Lefschetz fibrations we consider have singular fibers of the following type:
\begin{itemize}
    \item type $I_1$, i.e. an immersed sphere of homological self-intersection 0 with one positive double point;
    \item type $I_n$, $n\geq 2$, i.e. a chain of $n$ spheres with homological self-intersection $-2$.
\end{itemize}


Let $f:M\rightarrow D^2$ be a Lefschetz fibration with critical values $p_1, \dots, p_r$. Ehresmann's lemma implies that the restriction of $f$ to a map $M\backslash\{f^{-1}(p_1), \dots, f^{-1}(p_r)\}\rightarrow D^2\backslash \{p_1, \dots, p_r\}$ is a fiber bundle $E$. Fix a reference point $p_*\in D^2\backslash\{p_1,\dots, p_r\}$ whose fiber $f^{-1}(p_*)$ we denote as $F$. Choose a set of paths $l_i:[0, 1]\rightarrow M\backslash\{f^{-1}(p_1), \dots, f^{-1}(p_r)\}$ for $1 \leq i \leq r$ such that $l_i(0)=l(i)=p_*$ and such that $l_i$ encloses $p_i$ and no other critical values. The paths $l_i$ are called \textit{vanishing paths}. We can consider the pullback of the fiber bundle $E$ to $[0, 1]$. Since every fiber bundle over $[0, 1]$ is trivial, there exists a trivialization $T:[0, 1]\times F\rightarrow l_i^*E$. The induced diffeomorphism $T|_{\{1\}\times F}\cdot T^{-1}|_{\{0\}\times F}$, which is a well-defined element of the mapping class group of $F$, is called \textit{the monodromy} around $p_i$.  It can be shown that the monodromy around a critical value $p_i$ corresponding to a fiber of type $I_n$ is equal to the $n$-th power of a Dehn twist around some simple closed curve $C \subset F$; this curve is called \textit{the vanishing cycle} of the fiber $f^{-1}(p_i)$ \cite{Gompf}. 

The ordered set of monodromies around points $p_i$ is called \textit{the monodromy factorization} of $f$. Lefschetz fibration can be reconstructed (up to topological equivalence) from its monodromy factorization. There are two group actions on the set of monodromy factorizations that preserve the corresponding Lefschetz fibration:
\begin{itemize}
    \item The braid group 
    \begin{equation}
    \label{artinpresentation}
    B_r=\langle \sigma_1, \dots, \sigma_{r-1}|\sigma_i \sigma_j=\sigma_j\sigma_i \: \mathrm{if}\:|i-j|\geq 2, \: \sigma_i\sigma_{i+1}\sigma_i=\sigma_{i+1}\sigma_i\sigma_{i+1} \: \mathrm{for}\:  1\leq i \leq n-2\rangle
    \end{equation} acts by \textit{Hurwitz moves} (or \textit{mutations})
$$
\sigma_i:(\tau_1, \dots, \tau_i, \tau_{i+1}, \dots, \tau_r)\rightarrow (\tau_1, \dots, \tau_i \tau_{i+1} \tau_i^{-1}, \tau_i, \dots, \tau_r) \qquad \mathrm{for}\:1 \leq i \leq r. 
$$
\item The mapping class group of the smooth fiber $F$ acts by \textit{global conjugation}
$$
\phi:(\tau_1, \dots, \tau_r)\rightarrow (\phi \tau_1 \phi^{-1}, \dots, \phi \tau_r \phi^{-1}). 
$$
\end{itemize}
The problem of classifying Lefschetz fibrations over the disc is equivalent to the problem of classifying the equivalence classes of monodromy factorizations up to Hurwitz moves and global conjugation (see for example \cite{AurouxMap}).

\subsection{Lefschetz fibrations of genus 1}
We restrict to the case of Leschetz fibrations with smooth fiber diffeomorphic to a torus with one boundary component $\Sigma_{1, 1}$. 
\begin{definition}
\label{extremal}
\textit{A Lefschetz fibration of extremal rational type} is a Lefschetz fibration having 3 singular fibers of type $I_{l_0}$, $I_{m_0}$, $I_{n_0}$ for $l_0$, $m_0$, $n_0$ assuming values indicated in Table~1 such that, for some choice of vanishing paths, the vanishing cycles $C_1$, $C_2$, $C_3$ are non-separating curves and satisfy
\begin{equation}
\label{factorization9111}
\tau_{C_1}^{l_0}\tau_{C_2}^{m_0}\tau_{C_3}^{n_0}=\delta \tau_{C}^{m_0+l_0+n_0-12}.
\end{equation}
for some non-separating simple closed curve $C \subset \Sigma_{1, 1}$.
\end{definition}

\begin{table}[h!]
\label{delpezzo}
\caption{The possible values of $l_0$, $m_0$, $n_0$, and the configuration of vanishing cycles with the least possible intersection numbers realizing the given extremal rational type. Here $[u]$, $[v]$ denote some symplectic basis of $H_1(\Sigma_{1, 1}, \mathbb{Z})$.  }
\begin{tabular}{ |p{2cm}|p{1cm}|p{1cm}|p{1cm}|p{2cm}|p{2cm}|p{2cm}|p{1cm} | }
 
 \hline
 Number & $l_0$ &$m_0$ &$n_0$ &$[C_{1, min}]$ & $[C_{2, min}]$ &$[C_{3, min}]$&$[C_{min}]$\\
 \hline
 (1)   & 1    &1&   1 & $[v]-3[u]$ & $[v]$ & $[v]+3[u]$&$[u]$\\
 (2)&   1  & 1   &2& $[v]-4[u]$&$[v]$&$[v]+2[u]$&$[u]$ \\
 (3) & 1 & 2&  3& $[v]-3[u]$& $[v]$&$[v]+[u]$&$[u]$\\
 (4)    &1 & 1&  5 & $[v]+3[u]$ & $2[v]+[u]$&$[v]$&$[u]$\\
 (5)&   2  & 2&4 &$[v]-2[u]$& $[v]$ & $[v]+[u]$&$[u]$\\
 (6)& 3  & 3   &3 & $[v]-3[u]$ & $[v]$ & $[v]+3[u]$&$[u]$\\
 (7)& 1  & 2&6 & $2[v]-3[u]$& $[v]$& $[v]+[u]$&$[u]$\\
 (8)   & 1    &1&  8 &$2[v]-3[u]$&$2[v]-[u]$&$[v]$&$[u]$\\
 (9)&   2  & 4   &4 & $2[v]-[u]$&$[v]$&$[v]$&$[u]$\\
 (10) &1 & 3&  6 & $3[v]-2[u]$&$[v]$&$[v]+[u]$&$[u]$\\
 (11)    &1 & 1&  9& $3[v]-2[u]$& $3[v]-[u]$ &$[v]$&$[u]$\\
 (12)&   2  & 2 &8& $4[v]-3[u]$&$2[v]-[u]$&$[v]$&$[u]$\\
 (13)& 2 & 3   &6 & $3[v]-2[u]$& $2[v]-[u]$&$[v]$&$[u]$\\
 (14)& 1  & 5 &5& $5[v]-3[u]$&$2[v]-[u]$&$[v]$&$[u]$\\
 \hline
\end{tabular}
\end{table} 
\begin{remark}
Each of the 14 extremal rational types is realized by a Lefschetz fibration constructed by deleting a singular fiber and a section from an extremal rational elliptic surface. Note that extremal rational elliptic surfaces have been completely classified by Miranda--Ulf~\cite{Miranda1986}.  
\end{remark}

\section{Computations in the mapping class group}
\label{section-MCG}
Binary quadratic forms $f$ and $g$ are called \textit{equivalent} if there exist $a$, $b$, $c$, $d\in \mathbb{Z}$ such that
$$
f(a x+by,cx+dy)=g(x, y), \qquad ad-bc=1. 
$$
It is well-known that the discriminants of equivalent quadratic forms are equal. 

Let us fix a symplectic basis $[u]$, $[v]$ of $H_1(\Sigma_{1, 1}, \mathbb{Z})$. For any $\phi \in \mathrm{MCG}(\Sigma_{1,1})$, the expression
$$
\langle \phi \gamma, \gamma \rangle 
$$
for $\gamma=p[v]+q[u]$ defines a binary quadratic form in $p$, $q$. We denote its discriminant as $d(\phi)$. Note that $d(\phi)$ does not depend on the choice of symplectic basis of $H_1(\Sigma_{1, 1}, \mathbb{Z})$ because any two symplectic bases are related by an element of $SL(2, \mathbb{Z})$ (so the resulting quadratic forms are equivalent). 
\begin{lemma}
\label{discriminant}
Let $C_1$, $C_2\subset \Sigma_{1, 1}$ be simple closed curves with intersection $[C_1]\cdot [C_2]=a$. Then for positive integers $m$, $n$ we have
$$
d(\tau^m_{C_1}\tau^n_{C_2})=m^2 n^2 a^4-4mn a^2.
$$ 
\end{lemma}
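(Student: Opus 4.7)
My plan is to compute $\langle \tau_{C_1}^m \tau_{C_2}^n \gamma, \gamma\rangle$ directly using the Dehn-twist formula \eqref{dehneffect}, first in the auxiliary coordinates $x = \langle [C_1], \gamma\rangle$ and $y = \langle [C_2], \gamma\rangle$, and then to read off how the discriminant transforms when one passes back to the original coordinates $(p, q)$.

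Setting $\alpha = [C_1]$ and $\beta = [C_2]$ so that $\langle \alpha, \beta\rangle = a$, iterating \eqref{dehneffect} yields $\tau_{C_2}^n \gamma = \gamma + n\langle \beta, \gamma\rangle \beta$, and then
\[
\tau_{C_1}^m \tau_{C_2}^n \gamma = \gamma + m\langle \alpha, \gamma\rangle \alpha + n\langle\beta,\gamma\rangle \beta + m n a \langle\beta,\gamma\rangle \alpha.
\]
Pairing with $\gamma$ and invoking antisymmetry of $\langle \cdot,\cdot\rangle$ to kill $\langle\gamma,\gamma\rangle$ collapses this to
\[
\langle \tau_{C_1}^m \tau_{C_2}^n \gamma, \gamma\rangle = m x^2 + n y^2 + m n a\, xy,
\]
with $x = \langle \alpha,\gamma\rangle$ and $y = \langle \beta,\gamma\rangle$ viewed as linear forms in $p$ and $q$.

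Viewed as a binary quadratic form in $(x, y)$, this expression has discriminant $(mna)^2 - 4mn = m^2 n^2 a^2 - 4mn$. To descend to a form in $(p, q)$, I will expand $\alpha$ and $\beta$ in the chosen symplectic basis $[u], [v]$ and observe that the Jacobian of the linear substitution $(p, q) \mapsto (x, y)$ is, up to sign, the symplectic pairing $\langle \alpha, \beta\rangle = a$. The discriminant of a binary quadratic form is multiplied by the square of the Jacobian under a linear change of variables, so the discriminant in $(p, q)$ equals $a^2(m^2 n^2 a^2 - 4mn) = m^2 n^2 a^4 - 4 m n a^2$, matching the claim.

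I do not anticipate a genuine obstacle: once one identifies the intermediate coordinates $(x, y)$ in which the twist action takes the clean form $m x^2 + n y^2 + m n a\, xy$, the rest reduces to elementary bookkeeping together with the fact that discriminants scale by the square of the Jacobian. The only subtlety is tracking the sign in that Jacobian, but it enters only squared and is therefore harmless.
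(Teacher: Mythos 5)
Your proof is correct. The first half coincides with the paper's: both iterate Equation~\eqref{dehneffect} to get $\tau_{C_1}^m\tau_{C_2}^n\gamma-\gamma=m\langle [C_1],\gamma\rangle [C_1]+n\langle[C_2],\gamma\rangle[C_2]+mn\langle[C_2],\gamma\rangle\langle[C_1],[C_2]\rangle[C_1]$ and pair with $\gamma$. The finish is genuinely different. The paper picks the special symplectic basis $[C_1]=[u]$, $[C_2]=a[v]+b[u]$, expands the form explicitly in $(p,q)$ as $(m+nb^2+mnab)p^2-a(2nb+mna)pq+na^2q^2$, and checks by hand that the auxiliary integer $b$ cancels from the discriminant. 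You instead stay basis-free: you read off the form as $mx^2+mna\,xy+ny^2$ in the coordinates $x=\langle[C_1],\gamma\rangle$, $y=\langle[C_2],\gamma\rangle$, compute its discriminant $m^2n^2a^2-4mn$ there, and multiply by the square of the determinant of the substitution $(p,q)\mapsto(x,y)$, which is $\pm\langle[C_1],[C_2]\rangle=\pm a$ (a one-line check in any symplectic basis). This buys a conceptual explanation of where the extra factor $a^2$ comes from and dispenses with the cancellation of $b$; the paper's version is a more pedestrian but self-contained expansion. One remark: when $a=0$ your substitution is singular, so ``change of variables'' is a slight misnomer, but the identity $\mathrm{disc}(A^{T}MA)=(\det A)^2\,\mathrm{disc}(M)$ holds for arbitrary $A$, so your conclusion $d=0$ in that case is still valid; it would be worth one sentence to say so.
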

\begin{proof}
Choose a symplectic basis $[u]$, $[v]$ of $H_1(\Sigma_{1, 1}, \mathbb{Z})$ such that 
$$
[C_1]=[u], \qquad [C_2]=a[v]+b[u] 
$$
for some $b\in \mathbb{Z}$. 

Compute the value of the quadratic form $\langle \tau_{C_1}^m\tau^n_{C_2}\gamma, \gamma \rangle$ for a homology class $\gamma=p[v]+q[u]$
\begin{align*}
\tau^n_{C_2}\gamma-\gamma&=n\langle [C_2], \gamma \rangle [C_2], \\
\tau^m_{C_1}\tau^n_{C_2}\gamma-\gamma&=m\langle [C_1], \gamma \rangle [C_1]+n\langle [C_2], \gamma \rangle [C_2]+mn\langle [C_2], \gamma \rangle \langle [C_1], [C_2]\rangle [C_1],\\
\langle \tau^m_{C_1}\tau^n_{C_2}\gamma, \gamma \rangle&=m\langle [C_1], \gamma \rangle^2+n\langle [C_2], \gamma\rangle^2+mn\langle [C_1], \gamma \rangle \langle [C_2], \gamma \rangle \langle [C_1], [C_2] \rangle=\\
mp^2+n(bp-aq)^2+mnp(bp-aq)a&=(m + n b^2 + mnab)p^2-a(2 nb + mna)pq+na^2 q^2.
\end{align*}
Therefore, 
$$
d(\tau^m_{C_1}\tau^n_{C_2})=a^2(2nb + mna)^2-4(m + n b^2 + mnab)na^2=m^2 n^2 a^4-4mn a^2.
$$
\end{proof}
\begin{proposition}
\label{intersections}
Let $C_1$, $C_2$, $C_3$, $C_4\subset \Sigma_{1, 1}$ be non-separating simple closed curves such that
\begin{equation}
\label{klmn}
\tau_{C_1}^m\tau_{C_2}^n=\delta \tau_{C_3}^{-k}\tau_{C_4}^{-l}
\end{equation}
for some $m, n, k, l \in \mathbb{N}$. Then  
$$
mn\langle [C_1], [C_2]\rangle^2=lk\langle [C_3], [C_4] \rangle^2.
$$
\end{proposition}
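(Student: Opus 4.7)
The plan is to project equation~\eqref{klmn} through the symplectic representation $\mathrm{MCG}(\Sigma_{1,1}) \to SL(2,\mathbb{Z})$ and to extract the desired numerical identity by equating an $SL_2$-invariant computed on both sides via Lemma~\ref{discriminant}. Since $\delta$ lies in the kernel of this representation, the quadratic form $Q_\phi(\gamma) = \langle\phi\gamma,\gamma\rangle$ is unchanged by multiplying $\phi$ by $\delta$, so equality in~\eqref{klmn} implies $d(\tau_{C_1}^m\tau_{C_2}^n) = d(\tau_{C_3}^{-k}\tau_{C_4}^{-l})$.

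Applying Lemma~\ref{discriminant} directly to the left-hand side gives $d(\tau_{C_1}^m\tau_{C_2}^n) = m^2n^2 a_1^4 - 4mn a_1^2$, where $a_1 = \langle[C_1],[C_2]\rangle$. To process the negative exponents on the right, I would first observe that for any $\phi$ preserving the antisymmetric form $\langle\cdot,\cdot\rangle$ one has $\langle\phi^{-1}\gamma,\gamma\rangle = -\langle\phi\gamma,\gamma\rangle$, so $Q_{\phi^{-1}} = -Q_\phi$ and hence $d(\phi^{-1}) = d(\phi)$. Setting $\phi = \tau_{C_4}^l\tau_{C_3}^k$ and invoking Lemma~\ref{discriminant} then yields $d(\tau_{C_3}^{-k}\tau_{C_4}^{-l}) = l^2k^2 a_2^4 - 4lk a_2^2$ with $a_2 = \langle[C_3],[C_4]\rangle$. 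Equating and writing $x = mna_1^2$, $y = lka_2^2$, the identity $x^2-4x = y^2-4y$ factors as
\[
(x-y)(x+y-4) = 0.
\]

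The desired conclusion corresponds to the vanishing of the first factor, and the one subtle point is ruling out the spurious branch $x+y=4$ with $x \neq y$. I would eliminate this by refining the matrix calculation in the proof of Lemma~\ref{discriminant}, which actually pins down not just the discriminant but the full trace of the $SL(2,\mathbb{Z})$-image. A short expansion using the nilpotence of $\tau_{C_i} - \mathrm{id}$ gives $\mathrm{tr}(\tau_{C_1}^m\tau_{C_2}^n) = 2 - mna_1^2$ and, by the same computation applied with the exponents $-k,-l$, $\mathrm{tr}(\tau_{C_3}^{-k}\tau_{C_4}^{-l}) = 2 - lka_2^2$. Since $\delta$ acts trivially on $H_1$, equation~\eqref{klmn} forces these two traces to agree, which is strictly stronger than equality of their squares and immediately yields $mna_1^2 = lka_2^2$.
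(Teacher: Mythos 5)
Your proposal is correct, and it diverges from the paper at exactly the point where the argument has real content. Both you and the paper push \eqref{klmn} through the symplectic representation and compare the invariant of Lemma~\ref{discriminant} on the two sides, arriving at $(x-y)(x+y-4)=0$ with $x=mn\langle[C_1],[C_2]\rangle^2$ and $y=kl\langle[C_3],[C_4]\rangle^2$. The paper kills the spurious branch $x+y=4$ by a counting argument: the abelianization $\mathrm{MCG}(\Sigma_{1,1})\to\mathbb{Z}$ forces $m+n+k+l=12$, hence $mn\geq 5$ or $kl\geq 5$, hence $x+y\geq 5$ whenever both intersection numbers are nonzero --- which is why the paper must treat the degenerate case $\langle[C_1],[C_2]\rangle=0$ by a separate homological argument on vanishing cycles. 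You instead observe that the discriminant is $\mathrm{tr}^2-4$ of the $SL(2,\mathbb{Z})$ image, and that the trace itself, equal to $2-mn\langle[C_1],[C_2]\rangle^2$ on one side and $2-kl\langle[C_3],[C_4]\rangle^2$ on the other, is already forced to agree because $\delta$ acts trivially on $H_1$. This is a strictly sharper invariant: it yields $x=y$ in one line, requires neither the abelianization relation $m+n+k+l=12$ nor any degenerate-case analysis, and in fact renders your own discriminant detour redundant --- the trace identity alone proves the proposition. The auxiliary claims you rely on check out: $\langle\phi^{-1}\gamma,\gamma\rangle=\langle\gamma,\phi\gamma\rangle=-\langle\phi\gamma,\gamma\rangle$ for $\phi$ preserving the intersection form, so $d(\phi^{-1})=d(\phi)$; and the trace formula follows from the matrix computation in a symplectic basis with $[C_1]=[u]$ (it is valid for arbitrary integer exponents, since the trace depends only on the product of the exponents times the squared intersection number).
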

\begin{proof}
Note that under the abelianization map $\mathrm{MCG}(\Sigma_{1, 1})\rightarrow \mathbb{Z}$ the element $\delta$ maps to $12$ while Dehn twists around non-separating curves map to $1$. Therefore, Equation \eqref{klmn} implies that $m+n+k+l=12$. 

Let us consider the action of both sides of Equation \eqref{klmn} on a homology class $\gamma \in H_1(\Sigma_{1, 1}, \mathbb{Z})$
$$
\gamma+n\langle [C_2], \gamma \rangle [C_2]+m\langle [C_1], \gamma \rangle [C_1]+mn\langle [C_2], \gamma \rangle \langle [C_1], [C_2]\rangle [C_1]=
$$
\begin{equation}
\label{homology}
\gamma-l\langle [C_4], \gamma \rangle [C_4]-k\langle [C_3], \gamma \rangle [C_3]+kl\langle [C_4], \gamma \rangle \langle [C_3], [C_4]\rangle [C_3].
\end{equation}

Let us first deal with the case $\langle [C_1], [C_2]\rangle =0$. Because $[C_1]$ and $[C_2]$ are primitive homology classes by assumption, this implies $[C_1]=\pm [C_2]$. We claim that then necessarily $\langle [C_3], [C_4]\rangle=0$. To see this, consider \eqref{homology} for $\gamma=[C_1]$
\begin{equation}
\label{deg1}
l \langle [C_4], [C_1]\rangle [C_4]=k(l\langle [C_4], [C_1]\rangle \langle [C_3], [C_4]\rangle - \langle [C_3], [C_1]\rangle)[C_3]. 
\end{equation}
If $\langle [C_4], [C_1]\rangle\neq 0$, then Equation~\ref{deg1} implies that $[C_4]=\lambda [C_3]$ for some $\lambda \in \mathbb{Q}$ so $\langle [C_3], [C_4]\rangle=0$. 

If $\langle [C_4], [C_1]\rangle=0$, then Equation~\eqref{deg1} implies that $\langle [C_3], [C_1]\rangle=0$. Because $[C_1]$, $[C_3]$, and $[C_4]$ are primitive homology classes, we have $[C_3]$, $[C_4]=\pm [C_1]$ so $\langle [C_3], [C_4]=0$. 

Now we assume that $\langle [C_1], [C_2]\rangle \neq 0$. Lemma~\ref{discriminant} shows that
$$
m^2 n^2 \langle [C_1], [C_2] \rangle^4-4mn\langle [C_1], [C_2] \rangle^2=k^2 l^2 \langle [C_3], [C_4] \rangle^4-4kl\langle [C_3], [C_4]\rangle^2. 
$$
By rearranging the terms we get
$$
(mn\langle [C_1], [C_2]\rangle^2-kl \langle [C_3], [C_4]\rangle^2)(mn\langle [C_1], [C_2]\rangle^2+kl\langle [C_3], [C_4]\rangle^2)=
$$
$$
4(mn\langle [C_1], [C_2]\rangle^2-kl \langle [C_3], [C_4]\rangle^2). 
$$
If $mn\langle [C_1], [C_2]\rangle^2-kl \langle [C_3], [C_4]\rangle^2\neq 0$, we can simplify to get
\begin{equation}
\label{contradiction1}
mn\langle [C_1], [C_2]\rangle^2+kl\langle [C_3], [C_4]\rangle^2=4.
\end{equation}
Note that $\langle [C_1], [C_2]\rangle^2\geq 1$ and $\langle [C_3], [C_4]\rangle^2\geq 1$. Because $m+n+k+l=12$, at least one of the summands on the left-hand side of Equation \eqref{contradiction1} is larger than or equal to 5. Therefore, Equation \eqref{contradiction1} cannot hold.   
\end{proof}

\section{Topological uniqueness for Lefschetz fibration of extremal rational type}
\label{section-1119}
In this section we prove the topological uniqueness of Lefschetz fibrations of extremal rational type. To do this, we prove that the intersection numbers of $C_1$, $C_2$, $C_3$ with $C$ are related by a Markov-type equation \eqref{markov}. We then verify that the transitive action of the braid group on the set of positive integral solutions of Equation \ref{markov} is realized at the level of vanishing cycles by Hurwitz moves (Lemma \ref{4steps}). This implies that by a sequence of Hurwitz moves, we can reduce the intersection numbers of $C_1$, $C_2$, $C_3$ with $C$ to~1. After this, the problem is solved by an application of the change of coordinates principle in the sense of Margalit--Farb \cite{Margalit} (Lemma \ref{changecoordinates}). 

Let $C_1$, $C_2$, $C_3$ be non-separating simple closed curves in $\Sigma_{1, 1}$ such that 
\begin{equation}
\label{perfactorization}
\tau_{C_1}^l \tau_{C_2}^m \tau_{C_3}^n =\delta \tau_{C}^{-9}
\end{equation}
for some non-separating simple closed curve $C$. The integers $l$, $m$, $n$ are some permutation of $l_0$, $m_0$, $n_0$ of Table~1.  

Denote the algebraic intersections numbers as follows
\begin{equation}
\label{xyz}
x=\langle [C], [C_1]\rangle, \qquad y=\langle [C], [C_2]\rangle, \qquad z=\langle [C], [C_3] \rangle.
\end{equation}
\begin{lemma}
The following equations hold (possibly after a change of orientation of $C_1$ and $C_3$)
\begin{equation}
\label{orientation}
\langle C_1, C_2 \rangle=-\sqrt{\frac{(12-l-m-n)n}{lm}}z, \qquad \langle C_2, C_3 \rangle=-\sqrt{\frac{(12-l-m-n)l}{mn}}x.
\end{equation}
\end{lemma}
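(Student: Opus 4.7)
The plan is to derive the squared versions of \eqref{orientation} by applying Proposition \ref{intersections} twice to cyclic rearrangements of \eqref{perfactorization}, and then to pin down the signs via a direct homological computation combined with the freedom to reverse the orientations of $C_1$ and $C_3$.

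Because $\delta$ is central in $\mathrm{MCG}(\Sigma_{1,1})$, the factorization \eqref{perfactorization} is equivalent to either of
\[
\tau_{C_1}^l \tau_{C_2}^m = \delta\,\tau_C^{-9}\tau_{C_3}^{-n},
\qquad
\tau_{C_2}^m \tau_{C_3}^n = \delta\,\tau_{C_1}^{-l}\tau_C^{-9}.
\]
Abelianizing \eqref{perfactorization} gives $l+m+n = 3$, so $9 = 12-l-m-n$. Applying Proposition \ref{intersections} to the first identity yields $lm\,\langle[C_1],[C_2]\rangle^2 = (12-l-m-n)\,n\,z^2$, and to the second yields $mn\,\langle[C_2],[C_3]\rangle^2 = (12-l-m-n)\,l\,x^2$. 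Taking square roots gives \eqref{orientation} up to an undetermined sign in each equation.

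Dehn twists do not depend on the orientation of the underlying curve, so the orientations of $C_1$ and $C_3$ may be reversed freely without changing \eqref{perfactorization}. Reversing $C_1$ flips both $\langle [C_1],[C_2]\rangle$ and $x$, hence flips both ratios $\langle[C_1],[C_2]\rangle/z$ and $\langle[C_2],[C_3]\rangle/x$; reversing $C_3$ has the same effect on the two ratios. So the permitted orientation changes act diagonally on the pair of signs, and it suffices to show that the two ratios have the same sign a priori.

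For this I would evaluate both sides of \eqref{perfactorization} at $[C_3]\in H_1(\Sigma_{1,1},\mathbb{Z})$ using \eqref{dehneffect}, the triviality of $\delta$ on homology, and $\tau_{C_3}[C_3] = [C_3]$. After cancelling $[C_3]$ the resulting identity reads
\[
l\bigl(\langle[C_1],[C_3]\rangle + m\langle[C_2],[C_3]\rangle\langle[C_1],[C_2]\rangle\bigr)[C_1] + m\langle[C_2],[C_3]\rangle[C_2] + 9z\,[C] = 0
\]
in $H_1(\Sigma_{1,1},\mathbb{Z})$; pairing symplectically with $[C_1]$ collapses it to the single relation $m\,\langle[C_1],[C_2]\rangle\,\langle[C_2],[C_3]\rangle = 9\,xz$. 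Thus the two intersection products have the same sign as $xz$, the two ratios share a sign, and a suitable reversal of $C_1$ or $C_3$ delivers the minus signs of \eqref{orientation}. The only real delicacy here is the sign verification; the magnitudes are an almost immediate application of Proposition \ref{intersections}.
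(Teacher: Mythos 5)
Your derivation of the magnitudes is exactly the paper's: both proofs rewrite \eqref{perfactorization} (using centrality of $\delta$) as the two factorizations $\tau_{C_1}^l\tau_{C_2}^m=\delta\,\tau_C^{l+m+n-12}\tau_{C_3}^{-n}$ and $\tau_{C_2}^m\tau_{C_3}^n=\delta\,\tau_{C_1}^{-l}\tau_C^{l+m+n-12}$ and feed them to Proposition~\ref{intersections}, obtaining $lm\langle[C_1],[C_2]\rangle^2=(12-l-m-n)nz^2$ and $mn\langle[C_2],[C_3]\rangle^2=(12-l-m-n)lx^2$. Where you genuinely differ is the sign step, and your version is the more careful one. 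The paper simply asserts that one may orient $C_1$ and $C_3$ so that both signs in \eqref{orientation} come out negative; but, as you observe, reversing either $C_1$ or $C_3$ flips \emph{both} ratios $\langle[C_1],[C_2]\rangle/z$ and $\langle[C_2],[C_3]\rangle/x$ simultaneously, so the product of the two signs is invariant under the allowed reorientations and the assertion requires the a priori input that the two ratios agree in sign. Your identity $m\langle[C_1],[C_2]\rangle\langle[C_2],[C_3]\rangle=(12-l-m-n)xz$, obtained by pushing $[C_3]$ through \eqref{perfactorization} and pairing symplectically with $[C_1]$, supplies exactly that input; I have checked the computation and it is correct. Two small remarks. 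First, your deduction $l+m+n=3$ comes from reading the exponent $-9$ in \eqref{perfactorization} literally; comparison with Definition~\ref{extremal} and Table~1 shows the intended exponent is $l+m+n-12$, which is why the lemma is stated with $12-l-m-n$ rather than $9$. Your argument is unaffected, since all that matters is that the coefficient $12-l-m-n$ is positive. Second, comparing the signs of the two ratios by dividing by $xz$ tacitly assumes $x,z\neq 0$ (and you cannot invoke Proposition~\ref{nonzero} here without circularity, as it rests on \eqref{markov}, which rests on this lemma); in the degenerate case the corresponding squared identity forces the relevant intersection number to vanish, so that equation of \eqref{orientation} holds trivially and the remaining sign is fixed by a single reorientation. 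Neither point affects the correctness of your proof, which in fact closes a genuine gap in the paper's.
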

\begin{proof}
Let us apply Proposition~\ref{intersections} to the factorizations
$$
\tau_{C_1}^l\tau_{C_2}^m=\delta \tau_{C}^{l+m+n-12}\tau_{C_3}^{-n}
$$
and 
$$
\tau_{C_2}^m\tau_{C_3}^n=\delta \tau_{C_1}^{-l} \tau_{C}^{l+m+n-12}. 
$$
Then 
$$
lm\langle C_1, C_2 \rangle^2=(12-l-m-n)nxz^2, \qquad mn\langle C_2, C_3\rangle^2=(12-l-m-n)l x^2. 
$$
Because the Dehn twist around a curve does not depend on the orientation of the curve, we may orient $C_1$ and $C_3$ in such a way that \eqref{orientation} holds. 
\end{proof}
\begin{proposition}
The following equation holds 
\begin{equation}
\label{markov}
lx^2+my^2+nz^2=\sqrt{lmn(12-l-m-n)}xyz. 
\end{equation}
\end{proposition}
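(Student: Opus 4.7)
The plan is to exploit the given factorization $\tau_{C_1}^l \tau_{C_2}^m \tau_{C_3}^n = \delta \tau_C^{l+m+n-12}$ at the homological level. Passing through the symplectic representation $\mathrm{MCG}(\Sigma_{1,1}) \to SL(2,\mathbb{Z})$ (which kills $\delta$) and applying both sides to the class $[C]$, the right-hand side fixes $[C]$ (since $\tau_C$ does), so we obtain the vector identity $\tau_{C_1}^l \tau_{C_2}^m \tau_{C_3}^n [C] = [C]$ in $H_1(\Sigma_{1,1}, \mathbb{Z})$.

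The key step is to unfold the left-hand side using \eqref{dehneffect} iteratively, expressing each successive image in terms of $x, y, z$, the pairwise intersection numbers $\alpha := \langle [C_1], [C_2]\rangle$ and $\beta := \langle [C_2], [C_3]\rangle$ (whose absolute values are controlled by the preceding lemma), and the unknown $\gamma := \langle [C_1], [C_3]\rangle$. Two intersection pairings of the resulting vector identity then suffice. First I would pair with $[C_2]$; after substituting $\alpha = -\sqrt{Dn/(lm)}\,z$ and $\beta = -\sqrt{Dl/(mn)}\,x$ from the preceding lemma (setting $D := 12 - l - m - n$), this yields the auxiliary formula $\gamma + Dxz = y\sqrt{Dm/(ln)}$. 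Second, I would pair the same vector identity with $[C]$ itself; the same substitutions, together with the identities $m\alpha\beta = Dxz$ and $lm\alpha^2 = Dnz^2$ that fall out of the lemma, reduce this pairing to a single scalar equation linking $\gamma$ to $lx^2 + my^2 + nz^2$. Plugging in the formula for $\gamma$ causes two $\sqrt{Dlmn}\,xyz$ contributions to combine, producing exactly \eqref{markov}.

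The main obstacle is the bookkeeping: the iterated triple Dehn twist expands into several terms whose signs are sensitive to the orientation conventions for $C_1$ and $C_3$ fixed by the preceding lemma, and one must verify that the sign of $\sqrt{Dm/(ln)}$ in the formula for $\gamma$ is compatible with the chosen signs of $\alpha$ and $\beta$. This compatibility is automatic since both are derived from the same factorization, but it is precisely where a sloppy calculation could replace the Markov identity by a close cousin with wrong signs. Note that $D > 0$ for every row of Table~1, so the square roots are well-defined real numbers throughout.
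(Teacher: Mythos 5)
Your plan is sound and the computation does close up, but it takes a genuinely different (and heavier) route than the paper. The paper never touches $\gamma=\langle [C_1],[C_3]\rangle$: it first rewrites the factorization as $\tau_{C_2}^m\tau_{C_3}^n=\delta\,\tau_{C_1}^{-l}\tau_C^{l+m+n-12}$, so that when both sides are applied to $[C]$ the only $C_1$-contribution sits on the right-hand side as the single term $-l\langle [C_1],[C]\rangle[C_1]$, whose pairing with $[C]$ is just $-lx^2$; one substitution of $\langle [C_2],[C_3]\rangle$ from the preceding lemma then gives \eqref{markov} immediately. By expanding the full triple product you are forced to carry $\gamma$, and you eliminate it via the extra pairing with $[C_2]$ --- in effect you are proving the later lemma \eqref{orientation-y} first (by a different mechanism: the paper derives \eqref{orientation-y} from the $[C]$-pairing \emph{together with} the already-established Markov equation, whereas you extract it from the $[C_2]$-pairing alone). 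The trade-off: your route is longer and more sign-sensitive, but it delivers the Markov equation and the $\gamma$-formula simultaneously. One caveat you should make explicit: extracting $\gamma+Dxz=y\sqrt{Dm/(ln)}$ from the $[C_2]$-pairing requires dividing by $z$ (equivalently by $\alpha$, which is proportional to $z$), and at this stage of the argument $z\neq 0$ is not yet available --- Proposition~\ref{nonzero} is proved only afterwards, using \eqref{markov}, so invoking it here would be circular. You must treat $z=0$ as a separate case; there $\alpha=0$, every cross term in the $[C]$-pairing vanishes, the identity collapses to $lx^2+my^2+nz^2=0$, hence $x=y=z=0$ and \eqref{markov} holds trivially. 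With that case added, your argument is complete.
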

\begin{proof}
Consider the factorization
\begin{equation}
\label{23split}
\tau_{C_2}^m\tau_{C_3}^n=\delta \tau_{C_1}^{-l} \tau_{C}^{l+m+n-12}. 
\end{equation}
The right-hand side of Equation~\eqref{23split} applied to $[C]\in H_1(\Sigma_{1, 1}, \mathbb{Z})$ gives
\begin{equation}
\label{lhs23}
\delta \tau_{C_1}^{-l} \tau_{C}^{l+m+n-12}[C]=[C]-l\langle [C_1], [C]\rangle [C_1],
\end{equation}
while the left-hand side of Equation~\eqref{23split}
gives 
\begin{equation}
\label{rhs23split}
\tau_{C_2}^m \tau_{C_3}^n[C]=[C]+m\langle [C_3], [C]\rangle [C_3]+n\langle [C_2], [C]\rangle [C_2]+mn\langle [C_3], [C]\rangle \langle [C_2], [C_3]\rangle [C_2]. 
\end{equation}
Considering the intersection number of~\eqref{lhs23} and~\eqref{rhs23split} with $[C]$ we get the following
\begin{equation}
\label{unfin}
-lx^2=mz^2+ny^2+mnz\langle [C_2], [C_3]\rangle y. 
\end{equation}
Equations~\eqref{orientation} and~\eqref{unfin} imply the statement of the proposition. 
\end{proof}
\begin{proposition}
\label{nonzero}
$x, y, z\neq 0$. 
\end{proposition}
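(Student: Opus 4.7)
The plan is to rule out each of $x = 0$, $y = 0$, $z = 0$ by contradiction, using the Markov equation~\eqref{markov} to reduce to an impossible identity in $\mathrm{MCG}(\Sigma_{1, 1})$. The first observation is that vanishing of any single intersection number already forces the vanishing of all three. If $x = 0$, then the right-hand side of~\eqref{markov} is zero, so $my^2 + nz^2 = 0$, and positivity of $m, n$ forces $y = z = 0$; the cases $y = 0$ and $z = 0$ are symmetric. Hence it suffices to derive a contradiction from the assumption that $\langle [C], [C_i]\rangle = 0$ for $i = 1, 2, 3$.

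Next I would translate this into homology. In the rank-two symplectic lattice $H_1(\Sigma_{1, 1}, \mathbb{Z})$, two primitive classes are symplectically orthogonal if and only if they are equal up to sign, since vanishing of the symplectic form is equivalent to linear dependence. As $C$ and each $C_i$ are non-separating, their homology classes are primitive, so $[C_i] = \pm [C]$. A Dehn twist does not depend on the orientation of its defining curve, and on $\Sigma_{1, 1}$ each primitive class contains a unique isotopy class of simple closed curves (see the Remark in Section~\ref{Lefschetz-section}), so $\tau_{C_i} = \tau_C$ for every $i$. Substituting into~\eqref{perfactorization} collapses the left-hand side to $\tau_C^{l + m + n}$ and yields $\delta = \tau_C^{12}$.

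Finally I would exclude this identity via the symplectic representation $\mathrm{MCG}(\Sigma_{1, 1}) \to SL(2, \mathbb{Z})$ recalled in Section~2. Since $\delta$ generates the kernel of this representation, its image is the identity, whereas by~\eqref{dehneffect} the image of $\tau_C^{12}$ sends a class $\gamma$ to $\gamma + 12\langle [C], \gamma\rangle[C]$, which is not the identity on any $\gamma$ completing $[C]$ to a symplectic basis. This contradicts $\delta = \tau_C^{12}$ and proves the proposition. The only delicate step is the passage from symplectic orthogonality to equality of homology classes up to sign, which relies essentially on primitivity of both classes; the remainder is formal manipulation of the factorization, and no further combinatorial analysis of the fourteen extremal types is needed.
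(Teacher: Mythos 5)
Your proposal is correct and follows essentially the same route as the paper: use the Markov equation~\eqref{markov} to show that vanishing of one of $x, y, z$ forces all three to vanish, deduce $[C_i] = \pm[C]$ from primitivity in the rank-two lattice, collapse~\eqref{perfactorization} to $\tau_C^{12} = \delta$, and contradict this via the symplectic representation. You merely spell out a few steps the paper leaves implicit (the orthogonality-implies-proportionality argument and the uniqueness of the isotopy class in a primitive homology class), which is harmless.
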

\begin{proof}
Assume without loss of generality that $x=0$. Then Equation~\eqref{markov} implies that $y$ and $z$ are zero as well, i.e. $[C_1]$, $[C_2]$, $[C_3]$ all have zero algebraic intersection with $[C]$. Because $[C_1]$, $[C_2]$, $[C_3]$ are primitive homology classes, this means that all of them are equal to $\pm [C]$. Therefore, we have the following identity 
\begin{equation}
\label{12order}
\tau_{C}^{12}=\delta.
\end{equation}
The right-hand side of Equation \eqref{12order} acts on $H_1(\Sigma_{1, 1}, \mathbb{Z})$ trivially while the left-hand side acts non-trivially because $C$ is assumed to be non-separating. This is a contradiction.
\end{proof}
The following lemma is necessary for the proof of Lemma \ref{4steps} as it controls the effect of Hurwitz moves on $x$, $y$, $z$. 
\begin{lemma}
The following equality holds
\begin{equation}
\label{orientation-y}
\langle [C_1], [C_3]\rangle+m\langle [C_1], [C_2]\rangle \langle [C_2], [C_3]\rangle=\sqrt{\frac{(12-l-m-n)m}{ln}}y.
\end{equation}
\end{lemma}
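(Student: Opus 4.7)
The plan is to mirror the proof of the Markov-type equation \eqref{markov}: apply the defining factorization $\tau_{C_1}^l\tau_{C_2}^m\tau_{C_3}^n = \delta\tau_C^{l+m+n-12}$ to the homology class $[C]$, and then extract the desired identity by pairing the resulting relation with $[C_1]$ under the symplectic form.

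First, since $\delta$ acts trivially on $H_1(\Sigma_{1,1},\mathbb{Z})$ and $\tau_C$ fixes $[C]$ (because $\langle [C],[C]\rangle = 0$), the right-hand side sends $[C]$ to $[C]$. On the left-hand side, iteratively applying \eqref{dehneffect} and using the antisymmetry of $\langle\cdot,\cdot\rangle$ yields
$$\tau_{C_3}^n[C] = [C] - nz[C_3], \qquad \tau_{C_2}^m\tau_{C_3}^n[C] = [C] - nz[C_3] - m\bigl(y + nz\langle[C_2],[C_3]\rangle\bigr)[C_2],$$
and one further application of $\tau_{C_1}^l$ adds a multiple of $[C_1]$. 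Setting the outcome equal to $[C]$ gives a linear relation of the form $A[C_3] + B[C_2] + D[C_1] = 0$ with explicit scalars $A,B,D$.

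Next, I take the symplectic intersection of this relation with $[C_1]$. The $[C_1]$-term drops out since $\langle [C_1],[C_1]\rangle = 0$, and what remains is exactly
$$nz\langle[C_1],[C_3]\rangle + m\bigl(y + nz\langle[C_2],[C_3]\rangle\bigr)\langle[C_1],[C_2]\rangle = 0,$$
which rearranges to
$$\langle[C_1],[C_3]\rangle + m\langle[C_1],[C_2]\rangle\langle[C_2],[C_3]\rangle = -\frac{m\,\langle[C_1],[C_2]\rangle\,y}{nz}.$$
Substituting the value $\langle[C_1],[C_2]\rangle = -\sqrt{(12-l-m-n)n/(lm)}\,z$ from \eqref{orientation} on the right-hand side simplifies the coefficient of $y$ to $\sqrt{(12-l-m-n)m/(ln)}$, which is precisely the formula to be proved.

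There is no conceptual obstacle, only careful bookkeeping of signs and the antisymmetry of $\langle\cdot,\cdot\rangle$ in the iterated Dehn-twist expansion; the computation is essentially the same as in the proof of \eqref{markov}, differing only in the homology class against which we pair at the end (here $[C_1]$, rather than $[C]$ as in the Markov case).
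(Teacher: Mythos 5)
Your proof is correct, and it starts from the same vector identity as the paper's proof --- the action of both sides of \eqref{perfactorization} on $[C]$, i.e.\ Equation~\eqref{aligned} --- but the endgame is genuinely different and somewhat cleaner. The paper pairs that identity with $[C]$ itself, which produces a scalar relation containing all of $lx^2$, $my^2$, $nz^2$ together with three cross terms; it must then substitute both relations of \eqref{orientation} and invoke the Markov equation \eqref{markov} to cancel the quadratic part before the combination $\langle[C_1],[C_3]\rangle+m\langle[C_1],[C_2]\rangle\langle[C_2],[C_3]\rangle$ can be isolated. You instead pair with $[C_1]$, which kills the entire $[C_1]$-component at once and leaves the two-term relation $nz\langle[C_1],[C_3]\rangle+m\bigl(y+nz\langle[C_2],[C_3]\rangle\bigr)\langle[C_1],[C_2]\rangle=0$; after dividing by $nz$ (legitimate by Proposition~\ref{nonzero}, which is already available at this point) and substituting only the first relation of \eqref{orientation}, the identity drops out. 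Your route thus avoids any appeal to \eqref{markov} and uses only one of the two equations in \eqref{orientation}. The intermediate formulas you display ($\tau_{C_3}^n[C]=[C]-nz[C_3]$, etc.) are correct, and the final coefficient simplification $\frac{m}{n}\sqrt{(12-l-m-n)n/(lm)}=\sqrt{(12-l-m-n)m/(ln)}$ checks out.
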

\begin{proof}
Consider the action of both sides of Equation~\eqref{perfactorization} on $[C] \in H_{1}(\Sigma_{1, 1}, \mathbb{Z})$ 
\begin{align}
\label{aligned}
\nonumber [C]&+n\langle [C_3], [C]\rangle  [C_3]+\\ 
\nonumber m\langle [C_2], [C]\rangle[C_2]&+mn\langle [C_3], [C]\rangle \langle [C_2], [C_3]\rangle [C_2]+\\ \nonumber 
l\langle [C_1], [C]\rangle [C_1]&+ln \langle [C_3], [C] \rangle \langle [C_1], [C_3]\rangle [C_1]+\\ 
lm\langle [C_2], [C]\langle [C_1], [C_2]\rangle [C_1]&+lmn \langle [C_3], [C]\rangle \langle [C_2], [C_3] \rangle \langle [C_1], [C_2]\rangle [C_1]=[C].
\end{align}
Consider the intersection number of both sides of Equation~\eqref{aligned} with $[C]$
\begin{align*}
nz^2+my^2+mn(-z)\langle [C_2], [C_3]\rangle (-y)&+lx^2+\\
ln (-z)\langle [C_1], [C_3]\rangle (-x)+lm (-y)\langle [C_1], [C_2]\rangle (-x)&+lmn (-z)\langle [C_2], [C_3]\rangle \langle [C_1], [C_2]\rangle (-x)=0. 
\end{align*}
Substitute Equation~\eqref{orientation}
\begin{align*}
nz^2+my^2-\sqrt{(12-l-m-n)lmn}xyz&+lx^2+lnxz\langle [C_1], [C_3]\rangle -\\
\sqrt{(12-l-m-n)lmn}xyz&+lmn (-z)\langle [C_2], [C_3]\rangle \langle [C_1], [C_2]\rangle (-x)=0. 
\end{align*}
Simplifying using Equation~\eqref{markov} gives Equation~\eqref{orientation-y}.
\end{proof}
Since Dehn twists do not depend on the orientation on the vanishing cycles, there is some ambiguity in the choice of orientation on $C_1$, $C_2$, $C_3$. Let us fix it by adding additional restrictions. 
\begin{definition}
A choice of orientation on $C_1$, $C_2$, $C_3$ is called \textit{admissible} if $x, y, z>0$ and Equation \eqref{orientation} is satisfied. 
\end{definition}
\begin{lemma}
\label{preferred}
There exists a unique admissible choice of orientation on $C_1$, $C_2$, $C_3$. 
\end{lemma}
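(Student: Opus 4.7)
The plan is to show that the eight a priori orientation choices on $(C_1, C_2, C_3)$ reduce to a unique admissible one by combining three ingredients: bilinearity of the intersection pairing under orientation reversal, the nonvanishing result of Proposition~\ref{nonzero}, and positivity of the coefficient in Equation~\eqref{markov}.

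Uniqueness is immediate: since $x, y, z \neq 0$ by Proposition~\ref{nonzero}, the three positivity conditions $x > 0$, $y > 0$, $z > 0$ pin down the orientations of $C_1, C_2, C_3$ uniquely, so at most one of the eight orientations can satisfy admissibility.

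For existence, I would fix an orientation $\mathcal{O}_0$ for which Equation~\eqref{orientation} holds (such a choice exists by the preceding lemma). Evaluating Equation~\eqref{markov} at $\mathcal{O}_0$ gives $\sqrt{lmn(12-l-m-n)}\, xyz = lx^2+my^2+nz^2 > 0$, where the right-hand side is strictly positive because $l, m, n > 0$ and $x, y, z \neq 0$, and the coefficient on the left is real and positive because $12-l-m-n \geq 1$ for every extremal rational type in Table~1. Hence $\mathrm{sign}(xyz) = +1$ at $\mathcal{O}_0$. Next, for a flip $(\epsilon_1, \epsilon_2, \epsilon_3) \in \{\pm 1\}^3$ applied to $\mathcal{O}_0$, bilinearity of $\langle \cdot, \cdot \rangle$ gives $x \mapsto \epsilon_1 x$, $y \mapsto \epsilon_2 y$, $z \mapsto \epsilon_3 z$, $\langle C_1, C_2\rangle \mapsto \epsilon_1 \epsilon_2 \langle C_1, C_2\rangle$, and $\langle C_2, C_3\rangle \mapsto \epsilon_2 \epsilon_3 \langle C_2, C_3\rangle$. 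Comparing signs in the two parts of Equation~\eqref{orientation}, and using that $x, z \neq 0$, each equation is preserved under the flip if and only if $\epsilon_1 \epsilon_2 \epsilon_3 = 1$. Choosing $\epsilon_i$ to be the sign of the corresponding coordinate at $\mathcal{O}_0$ makes the new $x, y, z$ all positive, and the constraint $\epsilon_1\epsilon_2\epsilon_3 = \mathrm{sign}(xyz) = 1$ is automatically satisfied, so Equation~\eqref{orientation} still holds at the new orientation. This produces an admissible orientation.

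The only subtle point, and the main obstacle in this plan, is verifying that the two parts of Equation~\eqref{orientation} are preserved by the \emph{same} sign condition $\epsilon_1 \epsilon_2 \epsilon_3 = 1$, rather than by distinct conditions that together would force $\epsilon = (1,1,1)$ and eliminate any freedom. This works out because the two pairings $\langle C_1, C_2\rangle$ and $\langle C_2, C_3\rangle$ both involve $C_2$, so their sign changes under flips of $C_2$ are coupled in exactly the right way to produce a single balanced constraint.
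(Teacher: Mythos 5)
Your proof is correct and follows essentially the same route as the paper: both derive $xyz>0$ from Equation~\eqref{markov} together with Proposition~\ref{nonzero}, and then achieve positivity by reversing the orientations of an even number of the curves, checking that exactly such flips (your condition $\epsilon_1\epsilon_2\epsilon_3=1$) preserve Equation~\eqref{orientation}. The paper phrases the even-flip step concretely as ``exactly two of $x,y,z$ are negative, so reverse those two curves,'' but the content is identical.
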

\begin{proof}
The uniqueness follows from the requirement $x, y, z>0$. Let us prove existence of an admissible choce of orientation. We have already proved that it is possible to orient $C_1$, $C_2$, $C_3$ in such a way that Equation \eqref{orientation} and thus Equation \eqref{markov} hold. Because the left-hand side of Equation \eqref{markov} is strictly positive (Proposition~\ref{nonzero}), we see that either none of $x, y, z$ is negative (in which case we are done) or exactly two are negative ---- in this case we change the orientation of the two corresponding curves. Such a change of orientation preserves Equation \eqref{orientation}; to see this, assume without loss of generality that we change orientation on $C_1$, $C_2$. Then $\langle [C_1], [C_2]\rangle$ and $\langle [C], [C_3]\rangle$ are not changed while $\langle [C_2], [C_3]\rangle$ and $\langle [C], [C_1]\rangle$ both change sign. Therefore, Equation~\eqref{orientation} is preserved. 
\end{proof}
\begin{definition}
\textit{Mutations} are the following 3 transformations of vanishing cycles
\begin{enumerate}
    \item $(C_1, C_2, C_3)\rightarrow (C_1, -\tau^{m}_{C_2}C_3, C_2)$,
    \item $(C_1, C_2, C_3)\rightarrow (-\tau_{C_1}^{l}C_2, C_1, C_3)$,
    \item $(C_1, C_2, C_3)\rightarrow (C_2, -\tau_{C_2}^{-m}C_1, C_3)$.
\end{enumerate}
\end{definition}
\begin{lemma}
\label{4steps}
Mutations of vanishing cycles preserve the admissible orientation and Equations \eqref{perfactorization} and \eqref{orientation} if we change $l$, $m$, $n$ as follows
\begin{enumerate}
    \item $(l', m', n')=(l, n, m)$ for mutation 1; 
    \item $(l', m', n')= (m, l, n)$ for mutation 2; 
    \item $(l', m', n')=(m, l, n)$ for mutation 3. 
\end{enumerate}
\end{lemma}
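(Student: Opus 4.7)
The strategy is to verify, for each of the three mutations separately, three things: (a) that Equation~\eqref{perfactorization} continues to hold with the new curves and the permuted exponents, (b) that the intersection identities~\eqref{orientation} continue to hold with those exponents, and (c) that the new intersection numbers $x'$, $y'$, $z'$ with $[C]$ are still positive. The essential tools throughout are the conjugation formula $\tau_{\phi(C)} = \phi\tau_C\phi^{-1}$, the formula~\eqref{dehneffect} for the action of a Dehn twist on $H_1$, and the orientation-independence of Dehn twists, so that the minus signs in the definitions of the mutations matter only at the level of homology classes.

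Part (a) amounts to checking that the mutations are Hurwitz moves on the monodromy factorization. A one-line telescoping calculation does it; for mutation~1,
\begin{equation*}
\tau_{C_1}^{l}\,\tau_{-\tau_{C_2}^m C_3}^{n}\,\tau_{C_2}^{m} = \tau_{C_1}^l \bigl(\tau_{C_2}^m \tau_{C_3}^n \tau_{C_2}^{-m}\bigr) \tau_{C_2}^m = \tau_{C_1}^l \tau_{C_2}^m \tau_{C_3}^n = \delta \tau_C^{-9},
\end{equation*}
and mutations~2 and~3 are handled by analogous one-line rearrangements.

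For part (b), I would expand the two new pairwise intersection numbers via~\eqref{dehneffect} and rewrite them in terms of the old data. In mutation~1,
\begin{equation*}
\langle [C_1'], [C_2']\rangle = -\langle [C_1], [C_3]\rangle - m\,\langle [C_1], [C_2]\rangle\,\langle [C_2], [C_3]\rangle,
\end{equation*}
which by Equation~\eqref{orientation-y} equals $-\sqrt{\tfrac{(12-l-m-n)m}{ln}}\,y$, matching the first identity of~\eqref{orientation} with $(l',m',n') = (l,n,m)$ and $z' = y$; the second identity collapses because $\langle [C_2], [C_2]\rangle = 0$. For part (c), $x' = x > 0$ and $z' = y > 0$ are immediate, while substituting \eqref{orientation} into $y' = -z - m\langle [C_2], [C_3]\rangle y$ gives $y' = \tfrac{S}{n}xy - z$, where $S = \sqrt{lmn(12-l-m-n)}$, and the original Markov equation~\eqref{markov} forces $Sxyz > nz^2$, so $y' > 0$.

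The main obstacle will be executing the analogous calculations for mutations~2 and~3, where the interplay between the minus sign, the permutation of exponents, and the two identities of~\eqref{orientation} is less symmetric than in mutation~1. Each case requires simultaneous use of both identities of~\eqref{orientation} together with Equation~\eqref{orientation-y}, and the signs have to match exactly. The arguments will however be structurally parallel to the mutation~1 case, and the minus signs built into the mutation definitions are calibrated precisely so that the admissibility condition in the sense of Lemma~\ref{preferred} propagates unchanged from the original to the mutated triple.
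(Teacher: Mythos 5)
Your proposal is correct and follows essentially the same route as the paper: verify the telescoping identity for~\eqref{perfactorization}, use Equation~\eqref{orientation-y} to recover~\eqref{orientation} for the mutated triple, and deduce $y'>0$ from the Markov equation~\eqref{markov}, writing out only mutation~1 and treating the other two as analogous. Your sign bookkeeping ($y'=-z-m\langle[C_2],[C_3]\rangle y$) is in fact cleaner than the paper's, which contains a couple of harmless sign/radical typos in the final positivity computation.
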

\begin{proof}
We only write the proof for mutation 1. The proof for other mutations is analogous. 
    
    Equation~\eqref{perfactorization} is preserved because $\tau^l_{C_1}\tau^n_{-\tau^m_{C_2}C_3}\tau^m_{C_2}=\tau^l_{C_1}\tau_{C_2}^m\tau_{C_3}^n \tau_{C_2}^{-m}\tau^m_{C_2}=\tau^l_{C_1}\tau_{C_2}^m\tau_{C_3}^n=\delta \tau_{C}^{-9}$. Equation~\eqref{orientation} is preserved because
    \begin{align*}
    \langle [C'_1], [C'_2]\rangle &=-\langle [C_1], [C_3]\rangle-m\langle [C_2], [C_3]\rangle \langle [C_1]=-\sqrt{\frac{(12-l-m-n)m}{ln}}y, [C_2]\rangle=\\
    \langle [C'_2], [C'_3]\rangle &=\langle [C_2], [C_3]\rangle=-\sqrt{\frac{(12-l-m-n)l}{mn}}x. 
    \end{align*}
    To verify that the admissible orientation is preserved we only need to verify that $y'=-z+m\langle [C_2], [C_3]\rangle y>0$:
    $$
    -z+m\langle [C_2], [C_3]\rangle y=\sqrt{\frac{(12-l-m-n)lm}{n}}xy-z=\frac{lx^2+my^2+nz^2}{\sqrt{n}z}-z>0. 
    $$
\end{proof}
\begin{lemma}
\label{hurwitz-red}
Any 3 vanishing cycles satisfying \eqref{perfactorization} can be transformed by mutations to vanishing cycles $C'_1$, $C'_2$, $C'_3$ such that 
$$
\langle [C], [C'_1]\rangle=\langle [C], [C_{1, min}]\rangle, \qquad \langle [C], [C'_2]\rangle=\langle [C], [C_{2, min}]\rangle, 
$$
\begin{equation}
\label{minsol}
\langle [C], [C'_3]\rangle=\langle [C], [C_{3, min}]\rangle=1. 
\end{equation}
\end{lemma}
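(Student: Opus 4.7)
The plan is to show that the three mutations act on the triple $(x, y, z)$ by the Vieta involutions of the Markov-type equation~\eqref{markov}, and then to apply the transitivity theorem of Karpov--Nogin~\cite{Karpov}.

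First, I would compute explicitly the action of mutation~1 on the intersection numbers. Since $[C'_2]$ is, up to sign, $[\tau_{C_2}^m C_3]$, we have $y' = \pm z \pm m\langle [C_2], [C_3]\rangle y$; substituting the value of $\langle [C_2], [C_3]\rangle$ from Equation~\eqref{orientation} and using Equation~\eqref{markov} to eliminate the radical, this reduces to $y' = (l x^2 + m y^2)/(n z)$, which is exactly the Vieta partner of $y$ when~\eqref{markov} is viewed as a quadratic in $y$. Combined with the permutation $(l, m, n) \mapsto (l, n, m)$ recorded in Lemma~\ref{4steps}, this means mutation~1 realizes the standard Markov involution on the middle coordinate. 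Mutations~2 and~3 act analogously on the remaining coordinates, with the prescribed permutations of $(l,m,n)$.

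Next, I would invoke the Karpov--Nogin transitivity theorem, which asserts that the group generated by these Vieta involutions acts transitively on the set of positive integer solutions of each of the $14$ Markov-type equations~\eqref{markov}, the minimal orbit representative being precisely the triple $(\langle [C], [C_{1,min}]\rangle, \langle [C], [C_{2,min}]\rangle, \langle [C], [C_{3,min}]\rangle)$ that can be read off from Table~1. Proposition~\ref{nonzero} and Lemma~\ref{4steps} together guarantee that $(x, y, z)$ remains a positive integer triple satisfying~\eqref{markov} throughout, so the descent at the level of triples lifts to a sequence of mutations on $(C_1, C_2, C_3)$ producing cycles $C'_1, C'_2, C'_3$ that realize the minimal triple. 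Finally, an inspection of Table~1 shows that $[C_{min}] = [u]$ and the $[v]$-coefficient of $[C_{3,min}]$ is $1$ in every row, so $\langle [C], [C_{3,min}]\rangle = 1$ automatically, yielding Equation~\eqref{minsol}.

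The main obstacle I anticipate is the bookkeeping required to match our mutations with the generators of the Karpov--Nogin braid action: verifying that the permutations of $(l, m, n)$ accompanying each mutation are compatible with the convention in~\cite{Karpov}, and that the admissible orientation (Lemma~\ref{preferred}) is preserved at every step so that $(x, y, z)$ remains strictly positive and the Vieta formulas retain the exact form above. Once these conventions are aligned, the conclusion is immediate from the cited transitivity theorem.
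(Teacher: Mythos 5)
Your proposal follows essentially the same route as the paper: identify the three mutations of vanishing cycles with the Karpov--Nogin mutations of solutions of the Markov-type equation (your explicit computation $y'=(lx^2+my^2)/(nz)$ agrees with what falls out of the paper's Lemma~\ref{4steps}), then use their descent result to reach the minimal triple, and finally read off $\langle [C],[C_{3,min}]\rangle=1$ from Table~1. There is, however, one point you gloss over that the paper treats explicitly and that is genuinely needed for the statement as formulated. What Karpov--Nogin's Proposition~3.7(a) gives is that every positive solution can be mutated to \emph{a} minimum solution (one minimizing $x+y+z$), not to \emph{the} specific triple $(\langle [C],[C_{1,min}]\rangle,\langle [C],[C_{2,min}]\rangle,\langle [C],[C_{3,min}]\rangle)$; for two of the fourteen equations (their equations (4) and (8.4)) there are several minimum solutions, so ``transitivity with minimal orbit representative read off from Table~1'' is not available as a black box. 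The paper closes this by exhibiting explicit mutation sequences connecting the distinct minimum solutions (e.g.\ $(1,2,1)\to(2,1,1)$ by mutation~1, mutation~1, mutation~2). You would need to add this check, or otherwise justify that the minimum solution you land on is the one prescribed by Table~1; your other anticipated bookkeeping issues (matching the permutations of $(l,m,n)$ with the conventions of \cite{Karpov}, and preservation of the admissible orientation so that the descent stays among positive triples) are real but are exactly what the paper's Lemma~\ref{4steps} and Lemma~\ref{preferred} supply.
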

\begin{proof}
In \cite{Karpov} there is a table consisting of 14 Markov type equations. By direct comparison we see that the 14 equations of form (11) defined by Table 1 and the equations in \cite{Karpov} are the same. 

In Section 3.6 \cite{Karpov}, there is the definition of mutations $M_x$, $M_y$, $M_x$ of the solutions of Markov equation. One can check directly that the mutations of the solutions of Markov equation defined in \cite{Karpov} and in the mutations of the solutions of Markov equation induced by mutations of vanishing cycles are the same, up to reordering (namely, $M_x$ corresponds to mutation 3, $M_y$ corresponds to mutation 2, $M_z$ corresponds to mutation 3). 

Proposition 3.7(a) \cite{Karpov} asserts that any solution of Markov equation can be mutated into a minimum solution (i.e. a solution minimizing the sum $x+y+z$). Since their argument is that for a non-minimum solution there exists a mutation reducing the value of $x+y+z$, Proposition 3.7 (a) applies in our setting as well (because the existence of a mutation reducing $x+y+z$ does not depend on the way we order $x$, $y$, $z$). 

The table in \cite{Karpov} lists all minimum solutions of every Markov equation. For 2 equations, there are multiple minimum solutions. For each of the two, one can check directly that minimum solutions are related by mutations in our sense. Namely, for equation (4) \cite{Karpov} we can mutate (1, 2, 1) to (2, 1, 1) by applying mutation 1, then mutation 1, and then mutation 2. For equation (8.4) \cite{Karpov} we can mutate (5, 2, 1) to (5, 1, 2) by applying mutation 1, then mutation 2, then mutation 2. 

This means that for any fixed Markov equation, any 2 solutions are related by mutations in our sense. If one additionally notes that for any minimum solution $(x, y, z)$ of Markov equation $z$ equals 1, Lemma 4.7 follows.
\end{proof}
\begin{lemma}
\label{changecoordinates}
Let $C'_1$, $C'_2$, $C'_3$ be simple closed curves in $\Sigma_{1, 1}$ that satisfy the conclusion of Lemma~\ref{hurwitz-red}. Then the factorizations $\tau_{C'_1}^l \tau_{C'_2}^m \tau_{C'_3}^n$ and $\tau_{C_{1, min}}^l\tau_{C_{2, min}}^m \tau_{C_{3, min}}^n$ are globally conjugate.
\end{lemma}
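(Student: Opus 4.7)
The plan is to apply the change-of-coordinates principle: I want to produce an element $\phi \in \mathrm{MCG}(\Sigma_{1,1})$ sending each $C'_i$ to $C_{i, min}$ (as isotopy classes) and $C$ to $C_{min}$. Global conjugation by such a $\phi$ then turns $\tau_{C'_1}^l \tau_{C'_2}^m \tau_{C'_3}^n$ into $\tau_{C_{1, min}}^l \tau_{C_{2, min}}^m \tau_{C_{3, min}}^n$, which is exactly the conclusion.

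First, I would collect the intersection data. Under the admissible orientation, Equations~\eqref{orientation} and~\eqref{orientation-y} express each pairwise algebraic intersection $\langle [C'_i], [C'_j]\rangle$ as an explicit function of $(x,y,z)$ and $(l,m,n)$. Applying the same formulas to $(C_{1, min}, C_{2, min}, C_{3, min})$ and using the assumption $\langle [C], [C'_i]\rangle = \langle [C_{min}], [C_{i, min}]\rangle$ for each $i$, it follows that all six pairwise symplectic pairings in $([C], [C'_1], [C'_2], [C'_3])$ match those in $([C_{min}], [C_{1, min}], [C_{2, min}], [C_{3, min}])$.

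Next, I would promote this to an equality of homology classes via a change of basis. Since $z = \langle [C], [C'_3]\rangle = 1$, the pair $([C], [C'_3])$ is a symplectic basis of $H_1(\Sigma_{1,1}, \mathbb{Z})$; inspection of Table~1 shows that $([C_{min}], [C_{3, min}])$ is a symplectic basis as well. Expanding $[C'_1]$ and $[C'_2]$ in the basis $([C], [C'_3])$ and taking symplectic pairings with the two basis vectors forces the coefficients, and the same coefficients express $[C_{1, min}]$ and $[C_{2, min}]$ in $([C_{min}], [C_{3, min}])$. Hence the unique $A \in SL(2, \mathbb{Z})$ sending $[C] \mapsto [C_{min}]$ and $[C'_3] \mapsto [C_{3, min}]$ also sends $[C'_1] \mapsto [C_{1, min}]$ and $[C'_2] \mapsto [C_{2, min}]$.

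Finally, I would lift $A$ to some $\phi \in \mathrm{MCG}(\Sigma_{1,1})$ using surjectivity of the symplectic representation. By the remark that on $\Sigma_{1,1}$ every primitive homology class contains a unique isotopy class of simple closed curves, the homological equalities imply $\phi(C) = C_{min}$ and $\phi(C'_i) = C_{i, min}$ as isotopy classes, and global conjugation by $\phi$ carries the two factorizations into each other. The only delicate point I anticipate is verifying that $\langle [C_{min}], [C_{3, min}]\rangle = \pm 1$ uniformly across Table~1, which is what makes the basis $([C_{min}], [C_{3, min}])$ available and explains why Lemma~\ref{hurwitz-red} arranges $z = 1$; this reduces to a direct inspection of the 14 rows, and the rest is routine bookkeeping with Equations~\eqref{orientation} and~\eqref{orientation-y}.
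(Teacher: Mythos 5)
Your proposal is correct and rests on exactly the same inputs as the paper's proof --- namely that Equations~\eqref{orientation} and~\eqref{orientation-y}, together with the matching of $x$, $y$, $z$ guaranteed by Lemma~\ref{hurwitz-red}, force all pairwise algebraic intersection numbers of $([C],[C'_1],[C'_2],[C'_3])$ to agree with those of the minimal configuration --- but the final step is organized differently. The paper works throughout in the single basis $[u]=[C]$, $[v]$, observes that the $[v]$-coefficients of $[C'_i]$ and $[C_{i,min}]$ already coincide (they are the pairings with $[C]$), derives the linear relations $p_1(q_{3,min}-q_3)=q_{1,min}-q_1$ and $p_2(q_{3,min}-q_3)=q_{2,min}-q_2$ among the $[u]$-coefficients, and then exhibits the conjugating element \emph{explicitly} as $\tau_C^{\,q_{3,min}-q_3}$ via the identity $\tau_{1,0}\tau_{q,p}\tau_{1,0}^{-1}=\tau_{q+p,p}$; in particular the conjugator fixes the curve $C$ itself. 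You instead pass to the symplectic basis $([C],[C'_3])$ (legitimate because $z=1$, and $\langle[C_{min}],[C_{3,min}]\rangle=1$ in every row of Table~1), match coefficients abstractly, and lift the resulting element of $SL(2,\mathbb{Z})$ through the symplectic representation, invoking the uniqueness of the isotopy class of simple closed curves in a primitive homology class to convert the homological statement into one about curves. Your route is shorter to state but leans on two general facts (surjectivity of the symplectic representation and the McShane--Rivin uniqueness remark) where the paper's computation is self-contained and produces a concrete conjugator; conversely, your version makes transparent \emph{why} the normalization $z=1$ in Lemma~\ref{hurwitz-red} is the right one to aim for. Both arguments share the same implicit dependence on the minimal configurations of Table~1 being admissibly oriented solutions of~\eqref{perfactorization}, so neither has a gap the other avoids.
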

\begin{proof}
Denote the homology class of $C$ as $[u]\in H_1(\Sigma_{1, 1}, \mathbb{Z})$. Since $C$ is a non-separating simple closed curve, there exists a $[v]\in H_1(\Sigma_{1, 1}, \mathbb{Z})$ such that $[u]$ and $[v]$ form a symplectic basis of $H_1(\Sigma_{1, 1}, \mathbb{Z})$. By the assumptions, we have the following
$$
[C'_1]=p_1[v]+q_1[u], \qquad [C'_2]=p_2[v]+q_2[u], \qquad [C'_3]=[v]+q_3[u],
$$
$$
[C_{1, min}]=p_1[v]+q_{1, min}[u], \qquad [C_{2, min}]=p_2[v]+q_{2, min}[u], \qquad [C_{3, min}]=[v]+q_{3, min}[u]
$$
for some integers $p_1$, $p_2$, $q_1$, $q_2$, $q_3$, $q_{1, min}$, $q_{2, min}$, $q_{3, min}$. Equations~\eqref{orientation} and~\eqref{orientation-y} imply
$$
p_1 q_2-p_2 q_1=p_1 q_{1, min}-p_2 q_{2, min}, \qquad p_2 q_3-q_2=p_2 q_{3, min}-q_{2, min}, \qquad p_1 q_3-q_1=p_1 q_{3, min}-q_{1, min},
$$
or, equivalently, 
$$
p_1 (q_{3, min}-q_3)=q_{1, min}-q_1, \qquad p_2(q_{3, min}-q_3)=q_{2, min}-q_2. 
$$
This together with the identity $\tau_{1, 0}\tau_{q, p}\tau_{1, 0}^{-1}=\tau_{q+p, p}$ implies that the global conjugation of $\tau_{C'_1}^l \tau_{C'_2}^m \tau_{C'_3}^n$ by $\tau_C^{q_{3, min}-q_3}$ is equal to $\tau_{C_{1, min}}^l\tau_{C_{2, min}}^m \tau_{C_{3, min}}^n$. 
\end{proof}
\begin{theorem}
For each of the 14 extremal rational types, there exists a unique Lefschetz fibration over the disc (up to topological equivalence). 
\end{theorem}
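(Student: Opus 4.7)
The theorem is the synthesis of the structural results of Section~\ref{section-1119}. Fix an extremal rational type with data $(l_0,m_0,n_0)$ from Table~1, and let $f\colon M\to D^2$ be any Lefschetz fibration realising it. By Definition~\ref{extremal}, after reordering via a Hurwitz move we may write the monodromy factorization of $f$ as $(\tau_{C_1}^l,\tau_{C_2}^m,\tau_{C_3}^n)$ satisfying \eqref{perfactorization} for some non-separating simple closed curve $C\subset\Sigma_{1,1}$, where $(l,m,n)$ is a permutation of $(l_0,m_0,n_0)$.

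First I would fix the orientations of $C_1,C_2,C_3$ to be admissible, using Lemma~\ref{preferred}, so that the intersection numbers $x,y,z$ defined in \eqref{xyz} are strictly positive and Equation~\eqref{orientation} holds. Then I apply Lemma~\ref{hurwitz-red} to perform a sequence of mutations (each realized by a Hurwitz move, since mutation $i$ is, up to a change of orientation on a vanishing cycle that does not affect the Dehn twist, precisely a standard braid generator applied to the factorization) bringing the triple to $(C_1',C_2',C_3')$ with intersection numbers $\langle[C],[C_i']\rangle = \langle[C],[C_{i,\min}]\rangle$ matching the minimum solution of the corresponding Markov-type equation. Finally, Lemma~\ref{changecoordinates} produces a power of $\tau_C$ whose global conjugation sends the reduced factorization to the standard one $\tau_{C_{1,\min}}^l\tau_{C_{2,\min}}^m\tau_{C_{3,\min}}^n$.

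Since the standard factorization is realized by the Lefschetz fibration obtained by deleting a singular fiber and a section from the corresponding extremal rational elliptic surface (cf.~the remark after Table~1), this chain of Hurwitz moves and global conjugation shows that $f$ is topologically equivalent to the algebraic model. Existence and uniqueness of the equivalence class of Lefschetz fibrations of the given type follow at once.

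The main delicate point in carrying out this plan is the consistent bookkeeping of the permutation of the exponents $(l,m,n)$ induced by mutations (as recorded in Lemma~\ref{4steps}), together with the possibility that a given extremal rational type admits several inequivalent orderings, i.e. several minimum solutions of its Markov-type equation. As already observed inside the proof of Lemma~\ref{hurwitz-red}, the minimum solutions are themselves connected by mutations; combined with the Karpov--Nogin transitivity result on positive integral solutions, this guarantees that any starting triple $(l,m,n)$ can be mutated to precisely the ordered minimum configuration targeted by Lemma~\ref{changecoordinates}, which closes the argument.
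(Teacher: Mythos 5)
Your proposal is correct and follows essentially the same route as the paper: apply Lemma~\ref{hurwitz-red} to mutate any factorization satisfying~\eqref{perfactorization} to one with the minimal intersection numbers~\eqref{minsol}, then apply Lemma~\ref{changecoordinates} to globally conjugate it to the standard factorization $\tau_{C_{1,min}}^l\tau_{C_{2,min}}^m\tau_{C_{3,min}}^n$, and conclude by invertibility of mutations and conjugation. The extra detail you supply (fixing admissible orientations via Lemma~\ref{preferred} and tracking the permutation of the exponents) is consistent with how the paper delegates those points to the supporting lemmata.
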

\begin{proof}
Lemma~\ref{hurwitz-red} shows that any factorization $\tau_{C_1}\tau_{C_2}\tau_{C_3}$ satisfying~\eqref{perfactorization} can be related by mutations to a factorization $\tau_{C'_1}\tau_{C'_2}\tau_{C'_3}$ with intersection numbers of vanishing cycles given by Equation~\eqref{minsol}. Lemma~\ref{changecoordinates} then shows that $\tau_{C'_1}\tau_{C'_2}\tau_{C'_3}$ can be globally conjugated to $\tau_{C_{1, min}}^l\tau_{C_{2, min}}^m \tau_{C_{3, min}}^n$. Because mutations and global conjugation are invertible, this implies that any two factorizations satisfying~\eqref{perfactorization} can be related by a sequence of mutations and a global conjugation. 
\end{proof}

\section{Lefschetz fibrations with 2 type $I_1$ fibers}
\label{Lefschetz2}
The following definition was implicitly introduced in \cite{Auroux}. 
\begin{definition}
\label{good}
Let $[C_1]$, $[C_2]$ be two primitive homology classes in $H_1(\Sigma_{1, 1}, \mathbb{Z})$ with $\langle [C_1], [C_2]\rangle=n>0$. Let $[u]$, $[v]$ be some symplectic basis of $H_{1}(\Sigma_{1, 1}, \mathbb{Z})$ such that
\begin{equation}
\label{kinv}
[C_1]=[u], \qquad [C_2]=n[v]+k[u]
\end{equation}
for some $k \in \mathbb{Z}$. The residue class $k \: (\mathrm{mod}\:n)$ is called the \textit{Auroux invariant} of the pair of homology classes $[C_1]$, $[C_2]$. 
\end{definition}
\begin{remark}
Because we require $[C_2]$ to be a primitive homology class in Definition~\ref{good}, the Auroux invariant is relatively prime to $n$.  
\end{remark}
\begin{lemma}
The value of Auroux invariant does not depend on the choice of symplectic basis in Definition~\ref{good}. 
\end{lemma}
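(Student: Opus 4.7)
The plan is to show that any two symplectic bases $([u],[v])$ and $([u'],[v'])$ satisfying the hypotheses of Definition~\ref{good} yield the same residue class $k \pmod{n}$. The first step is to observe that the condition $[C_1]=[u]$ forces $[u]=[C_1]=[u']$, so both bases share the same first element and all the freedom is concentrated in the choice of the second element.

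Next, I would use the symplectic condition to pin down how $[v']$ can differ from $[v]$. Writing $[v'] = \alpha [u] + \beta [v]$ for integers $\alpha,\beta$ and imposing $\langle [u],[v']\rangle = 1 = \langle [u],[v]\rangle$ gives $\beta = 1$, so $[v'] = [v] + \alpha[u]$ for some $\alpha \in \mathbb{Z}$. (One also has to check $\det = 1$, which is automatic from $\beta=1$.) This completely parametrizes the ambiguity.

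Finally, I would substitute into the two expressions for $[C_2]$. From $[C_2] = n[v'] + k'[u']= n([v]+\alpha[u]) + k'[u] = n[v] + (n\alpha + k')[u]$, and comparison with $[C_2] = n[v] + k[u]$ using the fact that $\{[u],[v]\}$ is a basis yields $k = n\alpha + k'$, hence $k \equiv k' \pmod{n}$. There is no real obstacle here; the only point worth emphasizing is that the symplectic requirement rigidifies the change of basis to a single integer shift, which then contributes exactly a multiple of $n$ to the coefficient of $[u]$ in $[C_2]$.
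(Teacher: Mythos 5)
Your proof is correct and follows essentially the same route as the paper: both arguments note that $[u]$ is pinned down by $[C_1]$, that the symplectic condition forces $[v'] = [v] + \alpha[u]$, and that this shifts the coefficient $k$ by a multiple of $n$. Your version is slightly more explicit in deriving $\beta = 1$ from the pairing, but the substance is identical.
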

\begin{proof}
Let $[C_1]$, $[C_2]$ be primitive homology classes with intersection number $n>0$ and let $[u]$, $[v]$ be a symplectic basis of $H_1(\Sigma_{1, 1}, \mathbb{Z})$ such that Equation~\eqref{kinv} holds. The basis vector $[u]$ is uniquely determined by $[C_1]$. Therefore, under a change of basis the second basis vector can only change as $[v]\rightarrow [v]+m[u]$ for some integer $m$. Under such change of basis the value of $k$ in Equation~\eqref{kinv} changes as $k\rightarrow k-mn$. Therefore, $k\:(\mathrm{mod}\:n)$ is independent of the choice of basis.  
\end{proof}
Let $n$ be a positive integer. Let $H_n$ be the set of ordered pairs $([C_1], [C_2])$ of primitive homology classes $[C_1]$, $[C_2]\in H_1(\Sigma_{1, 1,}, \mathbb{Z})$ such that $[C_1]\cdot [C_2]=n>0$. The mapping class group $\mathrm{MCG}(\Sigma_{1, 1})$ acts on $H_n$ 
\begin{equation}
\label{action1}
([C_1], [C_2])\rightarrow (\phi[C_1], \phi[C_2]).
\end{equation} 
There is an action of $B_2$ on $H_n$; the action of the generator $\sigma=\sigma_1$ (see Equation~\eqref{artinpresentation}) is given by 
\begin{equation}
\label{action2}
([C_1], [C_2])\rightarrow (-\tau_{C_1}[C_2], [C_1]),
\end{equation}
where $C_1$ is a simple closed curve representing $[C_1]$

Let $F_n$ be the set of monodromy factorizations $\tau_{C_1}\tau_{C_2}$ of length 2 in $\mathrm{MCG}(\Sigma_{1, 1})$ such that $\langle [C_1], [C_2]\rangle=n$. 

There is a bijective map $f:H_n\rightarrow F_n$ which maps a pair of primitive homology classes $([C_1], [C_2])$ to the monodromy factorization $\tau_{C_1}\tau_{C_2}$, where $C_1$, $C_2$ are simple closed curves representing $[C_1]$, $[C_2]$ respectively. Let us show that the bijection $f:H_n\rightarrow F_n$ is equivariant with respect to $\mathrm{MCG}(\Sigma_{1, 1})$ and $B_2$ (see Appendix).
We prove that the map $f$ defined in Section~\ref{Lefschetz2} is equivariant with respect to the action of $\mathrm{MCG}(\Sigma_{1, 1})$ and $B_2$. 
\begin{lemma}
\label{equiv1}
Let $B_2$ act on $H_n$ according to Equation~\eqref{action2} and act on $F_n$ by Hurwitz moves. Then the map $f$ is equivariant with respect to $B_2$. 
\end{lemma}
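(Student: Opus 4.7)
The plan is to verify equivariance directly on the generator $\sigma = \sigma_1$ of $B_2$; since $B_2 = \langle \sigma_1 \rangle \cong \mathbb{Z}$ has no relations, checking equivariance for $\sigma$ extends automatically to the whole group. The heart of the argument is the classical conjugation identity $\phi \tau_C \phi^{-1} = \tau_{\phi(C)}$ for any $\phi \in \mathrm{MCG}(\Sigma_{1,1})$ and any simple closed curve $C$, combined with the fact that a Dehn twist depends only on the unoriented isotopy class of its curve.

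First, I would unpack the left-hand side $f(\sigma \cdot ([C_1], [C_2]))$. By Equation~\eqref{action2}, $\sigma \cdot ([C_1], [C_2]) = (-\tau_{C_1}[C_2], [C_1])$, and applying $f$ gives $\tau_{C'_1}\tau_{C_1}$, where $C'_1$ is the unique simple closed curve (by the Remark on primitive homology classes of $\Sigma_{1,1}$) representing $-\tau_{C_1}[C_2]$. Since Dehn twists are insensitive to orientation, I may take $C'_1 = \tau_{C_1}(C_2)$, the image of $C_2$ under the Dehn twist homeomorphism $\tau_{C_1}$; indeed this curve represents the homology class $\tau_{C_1}[C_2]$, which differs from $-\tau_{C_1}[C_2]$ only in orientation. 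Thus the left-hand side becomes $\tau_{\tau_{C_1}(C_2)}\tau_{C_1}$.

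Next, I would unpack the right-hand side. By the definition of the Hurwitz action on $F_n$, the element $\sigma$ sends $\tau_{C_1}\tau_{C_2}$ to $(\tau_{C_1}\tau_{C_2}\tau_{C_1}^{-1}) \tau_{C_1}$. Invoking the conjugation identity with $\phi = \tau_{C_1}$ and $C = C_2$, we obtain $\tau_{C_1}\tau_{C_2}\tau_{C_1}^{-1} = \tau_{\tau_{C_1}(C_2)}$, so the right-hand side simplifies to $\tau_{\tau_{C_1}(C_2)}\tau_{C_1}$, matching the left-hand side exactly.

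There is no significant obstacle: the entire argument is a two-line computation once the conjugation formula for Dehn twists is invoked, with the only minor bookkeeping being the sign ambiguity in the definition of the $B_2$-action on $H_n$, which is absorbed by orientation-independence of $\tau_C$. The only place to be a bit careful is noting that $B_2$ is infinite cyclic so that generator-wise equivariance suffices, and that $f$ is well-defined thanks to the bijection between primitive classes and isotopy classes of simple closed curves on $\Sigma_{1,1}$.
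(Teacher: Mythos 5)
Your proof is correct and follows essentially the same route as the paper: both verify equivariance on the generator $\sigma$ and reduce everything to the conjugation identity $\tau_{C_1}\tau_{C_2}\tau_{C_1}^{-1}=\tau_{\tau_{C_1}(C_2)}$ together with the orientation-independence of Dehn twists. Your write-up is in fact slightly more careful than the paper's one-line computation, which contains a small typo ($\tau_{C_2}$ where the second factor should be $\tau_{C_1}$).
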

\begin{proof}
Let $([C_1], [C_2])$ be an element of $H_n$. Then
$$
f(\sigma ([C_1], [C_2]))=f((-\tau_{C_1}[C_2], [C_1]))=\tau_{\tau_{C_1}C_2}\tau_{C_2}=\sigma f(([C_1], [C_2])). 
$$
\end{proof}
\begin{lemma}
\label{equiv2}
Let $\mathrm{MCG}(\Sigma_{1, 1})$ act on $H_n$ according to Equation~\eqref{action1} and act on $F_n$ by global conjugation. Then the map $f$ is equivariant with respect to $\mathrm{MCG}(\Sigma_{1, 1})$. 
\end{lemma}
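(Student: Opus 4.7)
The plan is to reduce the equivariance statement to the standard conjugation identity $\tau_{\phi(C)} = \phi \tau_C \phi^{-1}$ for Dehn twists, which holds for any orientation-preserving homeomorphism $\phi$ fixing $\partial \Sigma_{1,1}$ pointwise and any simple closed curve $C$. First I will observe that $\phi \in \mathrm{MCG}(\Sigma_{1,1})$ sends primitive homology classes to primitive ones, and that the unique simple closed curve $C$ representing $[C]$ is sent to the unique representative $\phi(C)$ of $\phi[C]$; this uses the remark from Section~2.1 that on $\Sigma_{1,1}$ every primitive homology class contains exactly one isotopy class of simple closed curves. This step is essential for the right-hand side of the equivariance formula to be well-defined at the level of homology.

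Given these two ingredients, the computation is short. Starting from the definition of $f$ and the $\mathrm{MCG}$-action on $H_n$, I would write
\[
f(\phi \cdot ([C_1],[C_2])) \;=\; f\bigl((\phi[C_1],\phi[C_2])\bigr) \;=\; \tau_{\phi(C_1)}\tau_{\phi(C_2)},
\]
then apply the conjugation identity to each factor and telescope:
\[
\tau_{\phi(C_1)}\tau_{\phi(C_2)} \;=\; (\phi \tau_{C_1} \phi^{-1})(\phi \tau_{C_2} \phi^{-1}) \;=\; \phi\,(\tau_{C_1}\tau_{C_2})\,\phi^{-1},
\]
which is exactly the global conjugation action by $\phi$ applied to $f(([C_1],[C_2])) = \tau_{C_1}\tau_{C_2}$, as required.

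There is really no substantive obstacle: the lemma is formal once one has the Dehn-twist conjugation identity together with the bijection between primitive homology classes and isotopy classes of simple closed curves on $\Sigma_{1,1}$. The only point that deserves any care at all is confirming that $f$ is well-defined from homology classes to mapping classes — but this is precisely the content of the remark cited above, and the paper already implicitly uses it in setting up the map. So the proof will consist of at most a few lines once the framework is in place.
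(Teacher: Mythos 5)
Your proof is correct and follows the same route as the paper: both reduce the claim to the identity $\tau_{\phi(C)} = \phi\,\tau_C\,\phi^{-1}$ and telescope the two factors. The extra remark about well-definedness via the unique isotopy class of simple closed curves in a primitive homology class is a sensible addition that the paper leaves implicit.
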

\begin{proof}
Let $([C_1], [C_2])$ be an element of $H_n$. Then
$$
f(\phi([C_1], [C_2]))=f((\phi[C_1], \phi[C_2]))=\tau_{\phi C_1}\tau_{\phi C_2}=\phi \tau_{C_1}\phi^{-1}\cdot \phi \tau_{C_2}\phi^{-1}=\phi f([C_1], [C_2]). 
$$
\end{proof}
Therefore, we can define the Auroux invariant of an element of $F_n$ as the Auroux invariant of its inverse image in $H_n$. 
\begin{lemma}
\label{hurwitz}
The Auroux invariant of an element of $H_n$ changes as $k\rightarrow -k^{-1}$ under action of $\sigma\in B_2$.
\end{lemma}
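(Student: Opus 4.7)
The plan is to do a direct computation: start with an explicit symplectic basis realizing the Auroux invariant of $([C_1],[C_2])$, apply the formula for $\sigma$, and then extract the Auroux invariant of the new pair by constructing a new symplectic basis.

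Concretely, pick $[u],[v]$ with $[C_1]=[u]$ and $[C_2]=n[v]+k[u]$. Using the Dehn twist formula~\eqref{dehneffect} and $\langle [C_1],[C_2]\rangle=n$, I compute
\[
\tau_{C_1}[C_2]=[C_2]+n[C_1]=n[v]+(k+n)[u],
\]
so after applying $\sigma$ the new pair is $([C_1'],[C_2'])=(-n[v]-(k+n)[u],\,[u])$. One easily checks $\langle[C_1'],[C_2']\rangle=n$, as expected.

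To read off the new Auroux invariant $k'$ I need a symplectic basis $[u'],[v']$ with $[u']=[C_1']=-n[v]-(k+n)[u]$ and $[C_2']=[u]=n[v']+k'[u']$. Solving the second equation for $[v']$ gives
\[
[v']=k'[v]+\frac{1+k'(k+n)}{n}\,[u].
\]
Integrality forces $n\mid 1+k'k$, i.e.\ $k'\equiv -k^{-1}\pmod n$. A short verification shows that with this choice of $k'$ one indeed has $\langle[u'],[v']\rangle=1$, so $[u'],[v']$ is a genuine symplectic basis and the Auroux invariant of the image pair is exactly $-k^{-1}\pmod n$.

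There is no real obstacle here beyond bookkeeping: the content of the lemma is captured by the fact that completing $[C_1']$ to a symplectic basis compatibly with the normal form for $[C_2']$ imposes a congruence on the coefficient, and that congruence is precisely $kk'\equiv-1\pmod n$. The mild technical point worth flagging is that one should check $[C_1']$ is primitive (equivalent to $\gcd(n,k)=1$, guaranteed by the remark after Definition~\ref{good}) so that the new symplectic basis exists in the first place.
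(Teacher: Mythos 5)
Your proposal is correct and follows essentially the same route as the paper: apply the Dehn twist formula to get $\tau_{C_1}[C_2]=n[v]+(k+n)[u]$, then express the new pair in a symplectic basis adapted to $[C_1']$ and read off the congruence $kk'\equiv-1\pmod n$. The only difference is that you make explicit the computation of $[v']$ and the integrality/symplecticity checks that the paper leaves implicit, which is a harmless elaboration rather than a different argument.
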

\begin{proof}
Let $([C_1], [C_2])$ be an element of $H_n$ with Auroux invariant equal to $k$. Choose a symplectic basis $[u]$, $[v]\in H_{1}(\Sigma_{1, 1}, \mathbb{Z})$ such that 
$$
[C_1]=[u], \qquad [C_2]=n[v]+k[u].
$$
By Equation~\eqref{dehneffect} we have
$$
-\tau_{C_1}[C_2]=-(n[v]+(k+n)[u]). 
$$
Choose a symplectic basis $[u']$, $[v']$ of $H_{1}(\Sigma_{1, 1}, \mathbb{Z})$ such that 
\begin{equation}
\label{newbasis}
-\tau_{C_1}[C_2]=[u'], \qquad [C_1]=n[v']+k'[u']. 
\end{equation}
Equation~\eqref{newbasis} implies 
$$
1\equiv -k' k \: (\mathrm{mod}\:n). 
$$
Therefore, the Auroux invariant of $\sigma([C_1], [C_2])$ is $-k^{-1}$. 
\end{proof}
\begin{lemma}
\label{minusinverse}
The monodromy factorizations $\tau_{C_1}\tau_{C_2}$ and $\tau_{C_3}\tau_{C_4}$ are equivalent under Hurwitz moves and global conjugation if and only if their Auroux invariants $k_1$, $k_2$ satify either $k_1=k_2$ or $k_1=-k_2^{-1}$.
\end{lemma}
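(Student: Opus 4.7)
The plan is to transfer the question from $F_n$ to $H_n$ via the equivariant bijection $f$ set up in the preceding lemmas, and then to determine the orbits of the combined $B_2$ and $\mathrm{MCG}(\Sigma_{1, 1})$ actions on $H_n$ completely in terms of Auroux invariants.

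For the ``only if'' direction, I would argue that each action changes the Auroux invariant only within the set $\{k, -k^{-1}\}$. The $\mathrm{MCG}(\Sigma_{1,1})$ action on $H_n$ (Lemma~\ref{equiv2}) factors through the symplectic representation, hence acts through $SL(2, \mathbb{Z})$ by permuting symplectic bases; since the Auroux invariant has already been shown to be basis-independent, global conjugation preserves it. A single Hurwitz move, by Lemma~\ref{equiv1} followed by Lemma~\ref{hurwitz}, sends $k$ to $-k^{-1}$; since $-(-k^{-1})^{-1} = k$, the inverse Hurwitz move has the same effect on invariants. Hence the orbit of $k$ under the combined action is a subset of $\{k, -k^{-1}\}$, giving one implication.

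For the ``if'' direction, the case $k_1 = -k_2^{-1}$ reduces after a single Hurwitz move to the case $k_1 = k_2$, so I may assume the latter. Let $([C_1], [C_2])$ and $([C_3], [C_4])$ be the preimages of the two factorizations under $f$, and use Definition~\ref{good} to choose symplectic bases $([u_1], [v_1])$ and $([u_2], [v_2])$ of $H_1(\Sigma_{1, 1}, \mathbb{Z})$ with
\begin{equation*}
[C_1] = [u_1], \quad [C_2] = n[v_1] + k_1' [u_1], \quad [C_3] = [u_2], \quad [C_4] = n[v_2] + k_2' [u_2],
\end{equation*}
and $k_1' \equiv k_2' \pmod{n}$. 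Replacing $[v_2]$ by $[v_2] + m[u_2]$ for a suitable $m \in \mathbb{Z}$, I can arrange $k_1' = k_2'$ on the nose; the linear map sending $([u_1], [v_1])$ to $([u_2], [v_2])$ then lies in $SL(2, \mathbb{Z})$ and carries $([C_1], [C_2])$ to $([C_3], [C_4])$. Surjectivity of $\mathrm{MCG}(\Sigma_{1, 1}) \to SL(2, \mathbb{Z})$ provides a lift to a mapping class $\phi$, and Lemma~\ref{equiv2} translates this into a global conjugation taking $\tau_{C_1}\tau_{C_2}$ to $\tau_{C_3}\tau_{C_4}$.

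The main obstacle I anticipate is the careful use of the basis freedom to promote the congruence $k_1' \equiv k_2' \pmod{n}$ to an honest equality $k_1' = k_2'$, so that a genuine $SL(2, \mathbb{Z})$ change of basis becomes available before invoking surjectivity of the symplectic representation. Once that is done, everything else is a direct unwinding of the equivariance of $f$ and of invariants established in the excerpted lemmas.
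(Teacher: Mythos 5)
Your proposal is correct and follows essentially the same route as the paper: the ``only if'' direction via invariance of the Auroux invariant under global conjugation together with Lemma~\ref{hurwitz}, and the ``if'' direction by reducing to equal invariants with one Hurwitz move and then lifting an $SL(2,\mathbb{Z})$ change of symplectic basis through the surjective symplectic representation. The only difference is that you spell out the step of promoting the congruence $k_1'\equiv k_2'\pmod{n}$ to an exact equality by adjusting $[v_2]$, which the paper leaves implicit by writing both pairs with the same integer $k$.
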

\begin{proof}
Assume that the Auroux invariants of $\tau_{C_1}\tau_{C_2}$ and $\tau_{C_3}\tau_{C_4}$ satisfy $k_1=-k_2^{-1}$. After a Hurwitz move applied to $\tau_{C_1}\tau_{C_2}$ Auroux invariants become equal $k_1=k_2=k$, i.e. we have
$$
[C_1]=[u], \qquad [C_2]=n[u]+k[u], 
$$
$$
[C_3]=[u'], \qquad [C_4]=n[v']+k[u']
$$
for some symplectic bases $[u]$, $[v]$ and $[u']$, $[v']$ of $H_1(\Sigma_{1, 1}, \mathbb{Z})$. Because the symplectic representation of $\mathrm{MCG}(\Sigma_{1, 1})$ is surjective, there exists an element $\phi\in \mathrm{MCG}(\Sigma_{1, 1})$ such that $\phi[u]=[u']$ and $\phi[v]=[v']$. Then $\phi$ conjugates $\tau_{C_1}\tau_{C_2}$ to $\tau_{C_3}\tau_{C_4}$. 

To see the converse statement, note that global conjugation does not change the Auroux invariant of a factorization. Therefore Lemma~\ref{hurwitz} implies that if $\tau_{C_1}\tau_{C_2}$ and $\tau_{C_3}\tau_{C_4}$ are equivalent under Hurwitz moves and global conjugation, then either $k_1=k_2$ or $k_1=-k_2^{-1}$. 
\end{proof}
Let $(\mathbb{Z}/m\mathbb{Z})^*$ be the set of invertible elements of the monoid $\mathbb{Z}/m\mathbb{Z}$. 
\begin{lemma}
\label{count}
The set of equivalence classes of the elements of $F_n$ under Hurwitz moves and global conjugation is in bijection with the quotient of $(\mathbb{Z}/m\mathbb{Z})^*$ by the involution $k\rightarrow -k^{-1}$.  
\end{lemma}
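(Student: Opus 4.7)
The plan is to use the Auroux invariant to construct an explicit bijection between the two sets, with the previous lemmas doing most of the work.

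First I would define the map $\Phi$ from equivalence classes in $F_n$ to the quotient of $(\mathbb{Z}/n\mathbb{Z})^*$ by $k \sim -k^{-1}$: given a monodromy factorization $\tau_{C_1}\tau_{C_2} \in F_n$, send it to the class of its Auroux invariant. This is well-defined on equivalence classes: global conjugation preserves the Auroux invariant (as noted in the proof of \Cref{minusinverse}), while a Hurwitz move sends the Auroux invariant $k$ to $-k^{-1}$ by \Cref{hurwitz}; in both cases the class in $(\mathbb{Z}/n\mathbb{Z})^*/(k\sim -k^{-1})$ is unchanged.

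Next I would verify injectivity. Suppose $\tau_{C_1}\tau_{C_2}$ and $\tau_{C_3}\tau_{C_4}$ have Auroux invariants $k_1, k_2$ with $k_1 \equiv k_2$ or $k_1 \equiv -k_2^{-1} \pmod{n}$; then \Cref{minusinverse} gives directly that the two factorizations are equivalent under Hurwitz moves and global conjugation, so they represent the same equivalence class in $F_n$.

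For surjectivity, pick any $k \in (\mathbb{Z}/n\mathbb{Z})^*$. Choose a symplectic basis $[u],[v]$ of $H_1(\Sigma_{1,1},\mathbb{Z})$ and set
\[
[C_1] := [u], \qquad [C_2] := n[v] + k[u].
\]
Since $\gcd(k, n) = 1$, the class $[C_2]$ is primitive, so by the remark in the preliminaries each of $[C_1], [C_2]$ is represented by a unique isotopy class of simple closed curves. The pair $([C_1],[C_2])$ lies in $H_n$ with Auroux invariant exactly $k$, so its image $\tau_{C_1}\tau_{C_2}$ under the bijection $f$ of the previous section lies in $F_n$ and has Auroux invariant $k$. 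Thus $\Phi$ hits every class.

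There is no real obstacle here: the content is already in \Cref{hurwitz} and \Cref{minusinverse}, and the only thing to notice is that the Auroux invariant is well-defined modulo $n$ and, by the remark following \Cref{good}, always lands in $(\mathbb{Z}/n\mathbb{Z})^*$, so $\Phi$ is a bijection of the asserted form.
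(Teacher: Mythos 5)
Your proof is correct and follows essentially the same route as the paper: well-definedness of the Auroux-invariant map from the invariance under global conjugation together with Lemma~\ref{hurwitz}, injectivity from Lemma~\ref{minusinverse}, and surjectivity of the invariant. The only difference is that you make the surjectivity explicit by exhibiting the pair $[C_1]=[u]$, $[C_2]=n[v]+k[u]$, where the paper simply asserts that the invariant map is surjective (and note the paper's $(\mathbb{Z}/m\mathbb{Z})^*$ is a typo for $(\mathbb{Z}/n\mathbb{Z})^*$, which you correctly use).
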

\begin{proof}
The invariance of the Auroux invariant under global conjugation and Lemma~\ref{hurwitz} imply that the Auroux invariant map $k:H_n\rightarrow (\mathbb{Z}/m\mathbb{Z})^*$ induces a map from the set of equivalence classes of the elements of $F_n$ under Hurwitz moves and global conjugation to the quotient $(\mathbb{Z}/m\mathbb{Z})^*$ by involution $k\rightarrow -k^{-1}$.  

The induced map is surjective since $k$ is surjective. 

Lemma~\ref{minusinverse} implies that the induced map is injective. 
\end{proof}

Define the function $\psi:\mathbb{Z}\rightarrow \{0, 1\}$ as follows
\[ 
\psi(n)=\left\{
\begin{array}{ll}
      1 \: \mathrm{if}\: n=2^i k\:\mathrm{with} \: k \: \mathrm{odd} \: \mathrm{and} \: 0 \leq i \leq 1\\
      0 \: \mathrm{otherwise}.\\
\end{array} 
\right. 
\]
\begin{theorem}
The number of distinct equivalence classes of elements of $F_n$ under Hurwitz moves and global conjugation is 
$$
\frac{\phi(n)+\psi(n)\prod \left(1+(-1)^{\frac{p_i-1}{2}}\right)}{2}, 
$$
where $\phi$ is the Euler totient function and the product ranges over odd prime number dividing $n$. 
\end{theorem}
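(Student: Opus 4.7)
By \Cref{count}, the number of equivalence classes of elements of $F_n$ under Hurwitz moves and global conjugation equals the number of orbits of the involution $\iota: k \mapsto -k^{-1}$ acting on the finite set $(\mathbb{Z}/n\mathbb{Z})^*$. Since $\iota$ has order dividing $2$, Burnside's lemma applied to the induced $\mathbb{Z}/2\mathbb{Z}$-action gives
\[
\#\mathrm{orbits} = \frac{|(\mathbb{Z}/n\mathbb{Z})^*| + |\mathrm{Fix}(\iota)|}{2} = \frac{\phi(n) + |\mathrm{Fix}(\iota)|}{2}.
\]
Thus the theorem reduces to the identity $|\mathrm{Fix}(\iota)| = \psi(n) \prod (1 + (-1)^{(p_i-1)/2})$, where the product is over odd primes $p_i$ dividing $n$.

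The fixed points of $\iota$ are exactly the residues satisfying $k^2 \equiv -1 \pmod n$. To count these I would factor $n = 2^a \prod p_i^{a_i}$ and apply the Chinese Remainder Theorem, so that the number of solutions is the product of the local counts modulo each prime power. For the $2$-primary part: modulo $2^0 = 1$ or modulo $2$, the equation has a unique solution; modulo $2^a$ for $a \geq 2$ it has none, since $-1 \equiv 3 \pmod 4$ is not a quadratic residue. The local factor at $2$ is therefore precisely $\psi(n)$. For an odd prime power $p^{a_i}$, the polynomial $f(k) = k^2 + 1$ has derivative $2k$, which is a unit mod $p$ whenever $f(k) \equiv 0 \pmod p$, so Hensel's lemma lifts each solution mod $p$ uniquely to a solution mod $p^{a_i}$. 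Hence the number of solutions mod $p^{a_i}$ equals the number of solutions mod $p$, which is $1 + \bigl(\tfrac{-1}{p}\bigr) = 1 + (-1)^{(p-1)/2}$ by Euler's criterion (giving $2$ when $p \equiv 1 \pmod 4$ and $0$ when $p \equiv 3 \pmod 4$).

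Multiplying the local counts yields $|\mathrm{Fix}(\iota)| = \psi(n) \prod_{p_i \mid n,\, p_i\, \mathrm{odd}} (1 + (-1)^{(p_i-1)/2})$, and substituting into the orbit formula gives the desired expression. The argument is entirely elementary: the main step is the count of square roots of $-1$ modulo $n$, a classical fact that one can verify directly in the small cases (e.g.\ $n \in \{1, 2, 3, 4, 5\}$) to confirm the formula. There is no essential obstacle; the only small subtlety is the application of Hensel's lemma to pass from mod $p$ to mod $p^{a_i}$, which is standard.
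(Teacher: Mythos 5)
Your proposal is correct and follows essentially the same route as the paper: reduce via Lemma~\ref{count} to counting orbits of the involution $k\mapsto -k^{-1}$ on $(\mathbb{Z}/n\mathbb{Z})^*$, observe that the fixed points are the square roots of $-1$ modulo $n$, and count those multiplicatively via the Chinese Remainder Theorem. The only (cosmetic) difference is that you invoke Hensel's lemma and Burnside's formula where the paper's appendix carries out the lifting to prime powers and the orbit count $\tfrac{\phi(n)-r(n)}{2}+r(n)$ by hand.
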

\begin{proof}
Lemma~\ref{count} implies that we only have to find the cardinality of the quotient of $(\mathbb{Z}/n\mathbb{Z})^*$ by the involution $k\rightarrow -k^{-1}$. This is done in the appendix. 
\end{proof}
\section{Acknowledgments} 
I would first like to thank my Research Science Institute mentor Barış Kartal. I also would like to thank Prof.~Paul Seidel for suggesting the problem of topological uniqueness and Dr.~Tanya Khovanova and Dr.~John Rickert for useful suggestions. I would like to thank the Research Science Institute, the Center for Excellence in Education,
and the Massachusetts Institute for Technology. Lastly, I would like to acknowledge the financial support of JSC NIS.
\section{Appendix}
Let $n$ be a positive integer and $\mathbb{Z}/n\mathbb{Z}$ the monoid of residue classes $\mathrm{mod}\:n$. Let $(\mathbb{Z}/n\mathbb{Z})^*$ be the set of invertible elements of $\mathbb{Z}/n\mathbb{Z}$. By the definition of Euler's totient function $\phi$, we have $|(\mathbb{Z}/n\mathbb{Z})^*|=\phi(n)$. Here we compute the cardinality of the quotient of $(\mathbb{Z}/n\mathbb{Z})^*$ by the involution $k\rightarrow -k^{-1}$.

The fixed points of the involution $k\rightarrow -k^{-1}$ are residue classes $k$ satisfying $k^2=-1 \: (\mathrm{mod}\:n)$. Denote the number of such residue classes by $r(n)$. 
\begin{lemma}
Let $n$ be a positive integer having the following decomposition into prime powers
$$
n=\prod_{i=1}^{m} p_i^{a_i}.
$$
Then $r(n)=\prod_{i=1}^{m} r(p_i^{a_i})$. 
\end{lemma}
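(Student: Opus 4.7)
The plan is to derive the multiplicativity of $r$ as a direct consequence of the Chinese Remainder Theorem. Since the primes $p_i$ are pairwise distinct, the $p_i^{a_i}$ are pairwise coprime, so the standard CRT gives a ring isomorphism
\[
\Phi : \mathbb{Z}/n\mathbb{Z} \;\xrightarrow{\sim}\; \prod_{i=1}^{m} \mathbb{Z}/p_i^{a_i}\mathbb{Z}, \qquad k \mapsto (k \bmod p_1^{a_1}, \ldots, k \bmod p_m^{a_m}).
\]
This is a \emph{ring} isomorphism, so it preserves squaring and sends the element $-1 \in \mathbb{Z}/n\mathbb{Z}$ to $(-1, -1, \ldots, -1)$ in the product.

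The key step is the observation that an element $k \in \mathbb{Z}/n\mathbb{Z}$ satisfies $k^2 = -1$ if and only if $\Phi(k)$ satisfies $\Phi(k)^2 = (-1, \ldots, -1)$ in the product ring, and, since squaring acts coordinatewise, this is in turn equivalent to the conjunction of the $m$ conditions $k_i^2 = -1$ in $\mathbb{Z}/p_i^{a_i}\mathbb{Z}$ for $i = 1, \ldots, m$. Therefore $\Phi$ restricts to a bijection between the set of solutions of $k^2 = -1$ in $\mathbb{Z}/n\mathbb{Z}$ and the Cartesian product of the corresponding solution sets modulo each $p_i^{a_i}$.

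Taking cardinalities yields
\[
r(n) \;=\; \left| \{ k \in \mathbb{Z}/n\mathbb{Z} : k^2 = -1 \} \right| \;=\; \prod_{i=1}^{m} \left| \{ k_i \in \mathbb{Z}/p_i^{a_i}\mathbb{Z} : k_i^2 = -1 \} \right| \;=\; \prod_{i=1}^{m} r(p_i^{a_i}),
\]
which is the desired identity. There is no real obstacle here; the only thing one should check carefully is that $\Phi$ is a ring (not merely group) isomorphism, so that the quadratic equation $k^2 = -1$ transports correctly, and that any solution of $k^2 = -1$ is automatically a unit (because then $k \cdot (-k) = 1$), so working inside $(\mathbb{Z}/n\mathbb{Z})^*$ rather than all of $\mathbb{Z}/n\mathbb{Z}$ makes no difference to the count.
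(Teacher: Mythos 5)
Your proof is correct and takes essentially the same route as the paper: both reduce the count to the Chinese Remainder Theorem, with the paper writing out the two explicit maps between $R(n)$ and $\prod_i R(p_i^{a_i})$ by hand and you packaging the same content as the statement that the CRT ring isomorphism transports the equation $k^2=-1$ coordinatewise. Your remark that any solution of $k^2=-1$ is automatically a unit is a sensible extra check, though not needed for the counting identity itself.
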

\begin{proof}
Let us construct a bijection between the set $R(n)$ of residue classes in $\mathbb{Z}/n\mathbb{Z}$ satisfying $k^2\equiv -1 \: (\mathrm{mod}\:n)$ and the direct product $\prod_{i=1}^m R(p_i^{a_i})$ of the sets of residue classes satisfying $k^2\equiv -1 \: (\mathrm{mod}\:p_i^{a_i})$ for $1 \leq i \leq m$. 

If we have an integer $k$ such that $k^2+1\equiv 0\: (\mathrm{mod}\:n)$, then obviously $k^2+1\equiv 0\: (\mathrm{mod}\:p_i^{a_i})$ for $1\leq i \leq m$. This defines a map $R(n)\rightarrow \prod_{i=1}^m R(p_i^{a_i})$. 

Now suppose we are given a collection of integer $k_1, \dots, k_m$ such that $k_i^2+1\equiv 0 \: (\mathrm{mod}\:p_i^{a_i})$ for $1 \leq i \leq m$. By the Chinese remainder theorem, there exists an integer $k$ such that $k\equiv k_i \: (\mathrm{mod}\:p_i^{a_i})$ for $1 \leq i \leq m$. Moreover, the integer $k$ is unique $\mathrm{mod}\:n$. Therefore, $k^2+1\equiv k_i^2+1\equiv 0 \:(\mathrm{mod}\:n)$ for $1\leq i \leq m$. This implies that $k^2+1\equiv 0 \: (\mathrm{mod}\:n)$ so we have constructed a map $\prod_{i=1}^m R(p_i^{a_i})\rightarrow R(n)$. Because the integer $k$ is unique $\mathrm{mod}\:n$, this map is inverse to the previously constructed map $R(n)\rightarrow \prod_{i=1}^m R(p_i^{a_i})$.
\end{proof}
\begin{lemma}
Let $p$ be an odd prime number and $a$ be a positive integer. Then $r(p^a)=1+(-1)^{\frac{p-1}{2}}$. 
\end{lemma}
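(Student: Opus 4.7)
The plan is to reduce the problem modulo $p^a$ to the problem modulo $p$ via a Hensel-type lifting argument, and then use Euler's criterion for the quadratic residue $-1$ modulo the odd prime $p$.

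First I would handle the base case $a=1$. The number $r(p)$ equals the number of solutions $k\in \mathbb{Z}/p\mathbb{Z}$ to the congruence $k^2\equiv -1\:(\mathrm{mod}\:p)$. Since $p$ is an odd prime, $\mathbb{Z}/p\mathbb{Z}$ is a field, so the polynomial $k^2+1$ has either $0$ or $2$ roots. By Euler's criterion, $-1$ is a quadratic residue modulo $p$ iff $(-1)^{(p-1)/2}=1$, i.e. iff $p\equiv 1\:(\mathrm{mod}\:4)$. In the residue case there are exactly $2$ solutions; otherwise there are $0$. Either way, $r(p)=1+(-1)^{(p-1)/2}$.

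Next I would lift from $p$ to $p^a$ using Hensel's lemma applied to $f(x)=x^2+1$. Given a solution $k_1\in \mathbb{Z}/p\mathbb{Z}$ with $f(k_1)\equiv 0\:(\mathrm{mod}\:p)$, the derivative is $f'(k_1)=2k_1$. Because $p$ is odd and $k_1^2\equiv -1\:(\mathrm{mod}\:p)$ forces $k_1\not\equiv 0\:(\mathrm{mod}\:p)$, we have $f'(k_1)\not\equiv 0\:(\mathrm{mod}\:p)$. Hence Hensel's lemma gives, for each $a\geq 1$ and each root $k_1$ modulo $p$, a unique lift $k_a\in \mathbb{Z}/p^a\mathbb{Z}$ with $k_a\equiv k_1\:(\mathrm{mod}\:p)$ and $k_a^2\equiv -1\:(\mathrm{mod}\:p^a)$. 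Conversely, any solution modulo $p^a$ reduces to a solution modulo $p$. This establishes a bijection between $R(p^a)$ and $R(p)$, so $r(p^a)=r(p)=1+(-1)^{(p-1)/2}$, proving the lemma.

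The main (and essentially only) obstacle is justifying the Hensel step cleanly; the argument above is standard once one verifies the non-vanishing of the derivative, which in turn uses nothing more than the oddness of $p$.
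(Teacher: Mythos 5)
Your proof is correct, but it packages the argument differently from the paper. The paper works entirely by hand: it first disposes of the case $p\equiv 3\:(\mathrm{mod}\:4)$, then proves the upper bound $r(p^a)\leq 2$ by a direct argument (taking two solutions $x$, $y$ with $x\not\equiv \pm y$ and deriving a contradiction from $(x-y)(x+y)\equiv 0\:(\mathrm{mod}\:p^a)$), and finally proves the lower bound $r(p^a)\geq 2$ by an explicit induction on $a$, solving for $q$ so that $(p^aq+k)^2\equiv -1\:(\mathrm{mod}\:p^{a+1})$ --- which is a hand-rolled Hensel lift for the specific polynomial $x^2+1$. You instead invoke Hensel's lemma as a black box: since $f'(k_1)=2k_1$ is a unit modulo $p$ (using only that $p$ is odd and $k_1$ is invertible), each root modulo $p$ lifts uniquely to a root modulo $p^a$, giving a bijection $R(p^a)\to R(p)$ that settles both the existence and the exact count in one stroke; the base case is then Euler's criterion plus the fact that a quadratic over the field $\mathbb{F}_p$ has at most two roots. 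Your route is shorter and cleaner, and in particular it absorbs the paper's separate upper-bound argument into the uniqueness clause of Hensel's lemma; the paper's route buys self-containedness, requiring nothing beyond elementary congruence manipulation. Both establish $r(p^a)=1+(-1)^{\frac{p-1}{2}}$ correctly.
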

\begin{proof}
If $p\equiv 3 \: (\mathrm{mod}\:4)$ then obviously $r(p^a)=0$. Therefore, we may assume that $p\equiv 1 \: (\mathrm{mod}\:4)$. 

Let us prove that $r(p^a)\leq 2$ for any positive integer $a$. Assume that $r(p^{a_0} )\geq 3$ for some positive integer ${a_0} $. Then there exist 2 integers $x$, $y$ such that $x^2\equiv y^2 \equiv -1 \: (\mathrm{mod}\:p^{a_0} )$, $x \not \equiv y \: (\mathrm{mod}\:p^{a_0} )$ and $x+y \not \equiv 0 \: (\mathrm{mod}\:p^{a_0} )$. Then we have
$$
x^2-y^2\equiv (x-y)(x+y)\equiv 0 \: (\mathrm{mod}\:p^{a_0} ). 
$$
Because $x+y \not \equiv 0 \: (\mathrm{mod}\:p^{a_0} )$ and $x-y \not \equiv 0 \: (\mathrm{mod}\: p^{a_0} )$, there exists an integer $s$ such that $0<s<{a_0} $ and 
$$
x-y=tp^s,
$$
where $t$ is an integer relatively prime to $p$. Therefore,
$$
(y+tp^s)\equiv y^2+t^2 p^{2s}+2tp^s y\equiv -1 \: (\mathrm{mod}\:p^{a_0} ).
$$
If we substract $y^2 \equiv -1 \: (\mathrm{mod}\: p^{a_0} )$ from both sides, we have
$$
t^2 p^{2s}+2tp^s y\equiv 0 \: (\mathrm{mod}\:p^{a_0} ). 
$$
This implies
$$
2tp^s y \equiv 0 \: (\mathrm{mod}\:p^{\mathrm{min}(2s, {a_0} )}).
$$
Because $t$ is relatively prime to $p$, this implies that $y \equiv 0 \: (\mathrm{mod}\:p)$. This is a contradiction because by assumption $-y^2\equiv 1 \: (\mathrm{mod}\:p)$, i.e. $y$ is invertible $\mathrm{mod}\:p$.  

Now we prove that $r(p^a)\geq 2$ for any positive integer $a$. Let us first consider the case $a=1$. We know from Euler's criterion that $r(p)>0$, i.e. there exists an integer $k$ such that $k^2\equiv -1 \: (\mathrm{mod}\:p)$. Note that $k'=-k$ satisfies $k'^2\equiv -1 \: (\mathrm{mod}\:p)$ and that $k' \not \equiv k \: (\mathrm{mod}\:p)$ because $p$ is odd. Therefore, $r(p)\geq 2$. 

Assume that we know that $r(p^a)\geq 2$ for some positive integer $a$. We claim that $r(p^{a+1})\geq 2$. Let $k$ be an integer such that $k^2 \equiv -1 \: (\mathrm{mod}\:p^a)$. Let us find an integer $q$ such that $(p^a q+k)^2\equiv -1 \: (\mathrm{mod}\:p^{a+1})$. We have
$$
(p^aq+k)^2\equiv p^{2a}q^2+k^2+2p^a q k\equiv k^2+2p^a qk\equiv -1 \: (\mathrm{mod}\:p^{a+1}),  
$$
or equivalently
\begin{equation}
\label{paq}
p^{a}q\equiv -2^{-1}(k+k^{-1})\: (\mathrm{mod}\:p^{a+1}),
\end{equation}
where the inverses are taken in the monoid $\mathbb{Z}/p^{a+1}\mathbb{Z}$. Because $k+k^{-1}\equiv 0 \: (\mathrm{mod}\: p^a)$, we can divide both sides of Equation~\eqref{paq} by $p^a$ to get $q$. Because $-(p^aq+k)^2\equiv -1 \: (\mathrm{mod}\: p^{a+1})$ and $-(p^a q+k)\not \equiv p^a q+k \: (\mathrm{mod}\: p^{a+1})$, we have $d(p^a)\geq 2$. 
\end{proof}
Considering quadratic residues $\mathrm{mod}\:4$, we see that $d(2^a)$ is 1 for $0 \leq a \leq 1$ and 0 for $a\geq 2$. 
\begin{proposition}
Let $n$ be a positive integer having the following decomposition into prime powers
$$
n=2^{a_1}\prod_{i=2}^m p_i^{a_i}. 
$$ 
The cardinality of the quotient of $(\mathbb{Z}/n\mathbb{Z})^*$ by the involution $k\rightarrow -k^{-1}$ is 
$$
\frac{\phi(n)+\psi(n)\prod_{i=2}^m \left(1+(-1)^{\frac{p_i-1}{2}}\right)}{2}, 
$$
where \[ 
\psi(n)=\left\{
\begin{array}{ll}
      1 \: \mathrm{if}\: n=2^i k\:\mathrm{with} \: k \: \mathrm{odd} \: \mathrm{and} \: 0 \leq i \leq 1\\
      0 \: \mathrm{otherwise}\\
\end{array} 
\right. 
\]
\end{proposition}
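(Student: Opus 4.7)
The plan is to reduce the counting to two ingredients already built up in the excerpt: (i) the size of $(\mathbb{Z}/n\mathbb{Z})^*$, which is $\phi(n)$ by definition, and (ii) the number $r(n)$ of fixed points of the involution $\iota:k\mapsto -k^{-1}$, which has been computed prime-power by prime-power in the preceding two lemmas together with the sentence immediately before the proposition. Since $\iota$ is an involution, every orbit has cardinality $1$ or $2$, and so by a trivial counting argument the number of orbits equals
\[
\frac{|(\mathbb{Z}/n\mathbb{Z})^*| + |\mathrm{Fix}(\iota)|}{2} \;=\; \frac{\phi(n) + r(n)}{2}.
\]
So the only remaining task is to express $r(n)$ in closed form in terms of the prime factorization of $n$ and recognize the result as $\psi(n)\prod_{i=2}^m \bigl(1+(-1)^{(p_i-1)/2}\bigr)$.

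First I would note that a fixed point is precisely a solution of $k^2 \equiv -1\pmod n$, so $r(n)$ denotes the same quantity studied in the two previous lemmas of the appendix. The multiplicativity lemma for $r$ (proven via the Chinese remainder theorem) factorizes
\[
r(n) \;=\; r(2^{a_1})\,\prod_{i=2}^m r(p_i^{a_i}).
\]
The odd prime-power lemma gives $r(p_i^{a_i}) = 1 + (-1)^{(p_i-1)/2}$, independent of $a_i$, and the remark before the proposition records that $r(2^{a_1})$ equals $1$ when $a_1\in\{0,1\}$ and $0$ otherwise. By the definition of $\psi$ in the statement, this last factor is exactly $\psi(n)$ (note that $\psi(n)$ only depends on the $2$-adic valuation of $n$). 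Substituting into the product gives
\[
r(n) \;=\; \psi(n)\prod_{i=2}^m \Bigl(1+(-1)^{\frac{p_i-1}{2}}\Bigr),
\]
and plugging this into $\tfrac{1}{2}(\phi(n)+r(n))$ yields the stated formula.

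There is essentially no obstacle here: the hard analytic content has already been extracted in the two preceding lemmas, and the argument just assembles Burnside-style orbit counting for an involution with the explicit formula for $r(n)$. The only point where one has to be mildly careful is the book-keeping of the $p=2$ factor — one must verify that the piecewise definition of $\psi$ really matches the separate computation of $r(2^{a_1})$ (namely $r(1)=r(2)=1$ and $r(2^a)=0$ for $a\geq 2$) — but this is a direct check modulo $4$ and is already recorded in the excerpt.
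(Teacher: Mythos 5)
Your proposal is correct and follows essentially the same route as the paper: orbit-counting for an involution gives $\tfrac{1}{2}(\phi(n)+r(n))$, and $r(n)$ is assembled from the multiplicativity lemma, the odd prime-power lemma, and the mod-$4$ computation of $r(2^{a_1})$. The only difference is cosmetic — the paper writes the count as $\tfrac{\phi(n)-r(n)}{2}+r(n)$ rather than invoking Burnside directly.
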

\begin{proof}
We have proved that the number of fixed points of the involution equals $r(n)=\psi(n)\prod_{i=2}^{m}\left(1+(-1)^{\frac{p_i-1}{2}}\right)$. Therefore, the cardinality of the quotient is 
$$
\frac{\phi(n)-r(n)}{2}+r(n)=\frac{\phi(n)+\psi(n)\prod_{i=2}^m \left(1+(-1)^{\frac{p_i-1}{2}}\right)}{2}.
$$
\end{proof}

\end{document}